\documentclass[11pt,intlimits]{amsart}


\usepackage{amsmath}
\usepackage{amssymb}
\usepackage{amscd}
\usepackage{amsthm}
\usepackage[all]{xy}
\usepackage{fullpage}
\usepackage{color}
\usepackage{ifthen}
\usepackage{graphicx}
\usepackage{verbatim}

\usepackage[backref]{hyperref}
\usepackage[alphabetic,backrefs,lite,nobysame]{amsrefs}


\theoremstyle{plain}
\newtheorem{theorem}[equation]{Theorem}
\newtheorem{lemma}[equation]{Lemma}

\newtheorem{prop}[equation]{Proposition}
\newtheorem{cor}[equation]{Corollary}

\theoremstyle{definition}

\theoremstyle{remark}
\newtheorem{remark}[equation]{Remark}

\numberwithin{equation}{subsection}


\newcommand\Q{{c}}		
\newcommand\s{{s}}		

\newcommand\SnAQ{{S_{n,\Q,q}(A)}}

\newcommand\dk{{d_{k,\rho}}}
\newcommand\dkLeg{d_{k,\text{Leg}}}


\newcommand\bs\bigskip
\newcommand\ssni{\smallskip\noindent}
\newcommand\np\newpage

\newboolean{usecolor}
\setboolean{usecolor}{true}

\ifx\color@rgb\@undefined\else
	\definecolor{violet}{rgb}{0.25,0,0.75}
	\definecolor{green}{rgb}{0,0.7,0}
\fi

\newcommand{\defi}[1]{{\sl #1}}


\newcommand\bbA{\mathbb{A}}

\newcommand\bbC{\mathbb{C}}
\newcommand\bbE{\mathbb{E}}
\newcommand\bbF{\mathbb{F}}

\newcommand\bbQ{\mathbb{Q}}

\newcommand\bbZ{\mathbb{Z}}

\newcommand\CC{\mathcal{C}}

\newcommand\GG{\mathcal{G}}

\newcommand\II{\mathcal{I}}

\newcommand\MM{\mathcal{M}}
\newcommand\PP{\mathcal{P}}

\newcommand\RR{\mathcal{R}}

\renewcommand\SS{{\mathcal{S}}}


\newcommand\Fq{\bbF_q}
\newcommand\Fpi{\bbF_\pi}
\newcommand\Fv{\bbF_v}


\newcommand\Kbar{\bar{K}}

\newcommand\Qbar{\bar\bbQ}
\newcommand\Qellbar{{\bar\bbQ_\ell}}


\newcommand\KS{K_S}



\newcommand\GammaOf[1]{{\Gamma_q(#1)}}

\newcommand\BQ{\GammaOf{\Q}}
\newcommand\BQFq{{(\bbF_q[t]/\Q\,\bbF_q[t])^\times}}





\newcommand\Frob{{\operatorname{Frob}}}
\newcommand\Gal{{\operatorname{Gal}}}
\newcommand\GL{{\operatorname{GL}}}
\newcommand\Hom{{\operatorname{Hom}}}

\newcommand\Res{\operatorname{Res}}

\newcommand\Tr{{\operatorname{Tr}}}

\newcommand\Var{{\operatorname{Var}}}

\newcommand\geom{{\operatorname{geom}}}

\newcommand\std{{\operatorname{std}}}

\newcommand{\vleq}{\rotatebox[origin=c]{90}{$\leq$}}
%
%
%
%
%
%

\newcommand\ssm{\smallsetminus}
\newcommand\seq{\subseteq}
\newcommand\sub{\subset}

\newcommand\into\hookrightarrow
\newcommand\onto\twoheadrightarrow
\newcommand\longto\longrightarrow


%
%
%
%

\newcommand\one{\mathbf{1}}


\newcommand\ZeroSpace{0}


\newcommand\GK{{G_K}}
\newcommand\GKC{{G_{K,\CC}}}
\newcommand\GKS{{G_{K,\SS}}}

\newcommand\GKR{{G_{K,\RR}}}
\newcommand\Gv{{G_v}}




\newcommand\LC{{L_\CC}}
\newcommand\Lf{{L_{\text{fin}}}}




\newcommand\Aone{{\bbA^1}}



\newcommand\PhiOf[1]{\Phi_q(#1)}

\newcommand\PhiQ{\PhiOf{\Q}}

\newcommand\PhiU{{\PhiOf{u}}}

\newcommand\PhiUNu{{\PhiU^\nu}}



\newcommand\PhiQGood[1]{{\PhiQ_{#1\,\rhomod{good}}}}

\newcommand\PhiQMixed[1]{{\PhiQ_{#1\,\rhomod{mixed}}}}
\newcommand\PhiQHeavy[1]{{\PhiQ_{#1\,\rhomod{heavy}}}}
\newcommand\PhiQBig[1]{{\PhiQ_{#1\,\rhomod{big}}}}


\newcommand\rhomod[1]{\mathrm{#1}}


\newcommand\trhochi{\theta_{\rho,q,\dc}}



\newcommand\Ggeom[2]{{\GG_\geom(#2,#1\PhiUNu)}}


\newcommand\dc{{\varphi}}

\newcommand\chinot{{\mathbf{1}}}
\newcommand\chiz{{\dc_!}}
\newcommand\chibarz{{\bar\dc_!}}

\newcommand\chione{{\dc_1}}
\newcommand\chitwo{{\dc_2}}
\newcommand\chionez{{\dc_{1!}}}
\newcommand\chitwoz{{\dc_{2!}}}
\newcommand\chionebarz{{\bar\dc_{1!}}}
\newcommand\chitwobarz{{\bar\dc_{2!}}}


\newcommand\rholv{{\rho_v}}

\newcommand\rhochi{{\rho\otimes\dc}}




\newcommand\LCStar{{L^*_\CC}}


\newcommand\ckn[1]{{c_{k,#1,n}}}





\newcommand{\loccit}{{\it loc.\,cit.}}



\newenvironment{enum}%
	{%
	 \begin{enumerate}\setlength{\itemsep}{0.075in}}%
	{\end{enumerate}%
	}


\newcommand{\ES}{\mathcal S} 


\newcommand{\divid}{d} 


\begin{document}


\begin{abstract}
We compute the variances of sums in arithmetic progressions of generalised $k$-divisor functions related to certain $L$-functions in $\mathbb{F}_q[t]$, in the limit as $q\rightarrow\infty$. This is achieved by making use of recently established equidistribution results for the associated Frobenius conjugacy classes.  The variances are thus expressed, when $q\rightarrow\infty$, in terms of matrix integrals, which may be evaluated.  Our results extend those obtained previously in the special case corresponding to the usual  $k$-divisor function, when the $L$-function in question has degree one.  They illustrate the role played by the degree of the $L$-functions; in particular, we find qualitatively new behaviour when the degree exceeds one.  Our calculations apply, for example, to elliptic curves defined over $\mathbb{F}_q[t]$, and we illustrate them by examining in some detail the generalised $k$-divisor functions associated with the Legendre curve. 
\end{abstract}


\title[Variance of sums of divisor functions]%
{Variance of sums in arithmetic progressions of divisor functions associated with higher degree $L$-functions in $\mathbb{F}_q[t]$}

\author{Chris Hall%
\address{Department of Mathematics, Western University, London, ON, Canada, N6A 5B7}}

\author{Jonathan P.~Keating%
\address{School of Mathematics, University of Bristol, Bristol BS8 1TW, UK}}

\author{Edva Roditty-Gershon%
\address{School of Mathematics, University of Bristol, Bristol BS8 1TW, UK}}

\thanks{ We are pleased to acknowledge support under EPSRC Programme Grant EP/K034383/1
LMF: \textit{L}-Functions and Modular Forms.  JPK is also grateful for support from a Royal Society Wolfson Research Merit Award and ERC Advanced Grant 740900 (LogCorRM).}


\allowdisplaybreaks
\maketitle


\section{Introduction}\label{sec:introduction}

For a fixed integer $k\geq 2$ the generalized divisor function $\divid_k(n)$ denotes the number of ways of writing a (positive) integer as a product of $k$ factors:
$$
\divid_k(n)=\sum_{m_{1}\cdot m_{2}\cdots m_{k}=n}1
$$
It is related to the $k$th power of the Riemann zeta-function, defined by
\begin{equation}\label{zeta}
	\zeta(s)
	=
	\sum_{n=1}^\infty \frac{1}{n^s}
\end{equation}
when ${\rm Re}s>1$, in that
\begin{equation}\label{zetak}
	\zeta(s)^k
	=
	\sum_{n=1}^\infty \frac{\divid_k(n)}{n^s}
\end{equation}
for ${\rm Re}s>1$.

Define $\Delta_k(x)$ by 
\begin{equation}
\Delta_k(x):=\sum_{n\leq x}\divid_k(n)-\Res_{s=1}\frac{x^{s}\zeta^{k}(s)}{s}=\sum_{n\leq x}\divid_k(n)-xP_{k-1}(\log x)
\end{equation}
where $P_{k-1}(u)$ is a certain polynomial of degree $k-1$; see, for example, \cite{Titch} Chapter XII.

The mean square of $\Delta_2(x)$ was computed by Cr\'amer \cite{Cramer} for $k=2$, and by Tong \cite{Tong} for $k\geq 3$ (assuming the Riemann Hypothesis (RH) if $k\geq 4$), to be 
\begin{equation}\label{eq:Tong}
	\frac 1X\int_X^{2X} \Delta_k(x)^2 dx
	\sim
	c_k X^{1-\frac 1k},
\end{equation}
for a certain constant $c_k$. Heath-Brown \cite{HB1992} showed that $\Delta_k(x)/x^{\frac 12-\frac 1{2k}}$ has a limiting value distribution (for $k\geq 4$ one needs to assume RH); it is non-Gaussian.

Our main focus will be on  sums of  divisor functions over arithmetic progressions
\begin{equation}
	\ES_{\divid_k}(A)
	=
	\ES_{\divid_k;X;Q}(A)
	=
	\sum_{\substack{n\leq X\\n=A\bmod Q}}
	\divid_k(n)
\end{equation}
For the standard divisor function ($k=2$), it is known that if $Q<X^{2/3-\epsilon}$ then
\begin{equation}
	\ES_{\divid_2}(A)
	=
	\frac{X p_Q(\log X)}{\Phi(Q)} + O(X^{1/3+o(1)})
\end{equation}
for some linear polynomial $p_Q$. This is due to unpublished work of Selberg. For recent work on the asymptotics of sums of $\divid_3$ over arithmetic progressions, see \cite{FKM} and the literature cited therein.

The variance $\Var(\ES_{\divid_2;X;Q})$  of $\ES_{\divid_2}$
has been studied by Motohashi \cite{Motohashi},
Blomer \cite{Blomer}, Lau and Zhao \cite{Lau Zhao}, the result being \cite{Lau Zhao} (we assume $Q$ prime for simplicity):

\begin{enum}
\item If $1\leq Q<X^{1/2+\epsilon}$ then
\begin{equation}
	\Var(\ES_{\divid_2;X;Q}) \ll X^{1/2} + (\frac XQ)^{2/3+\epsilon}.
\end{equation}

\item For $X^{1/2}<Q<X$,
\begin{equation}\label{eq:Lau Zhao}
	\Var(\ES_{\divid_2;X;Q})
	=
	\frac XQ p_3(\log \frac{Q^2}X) + O((\frac XQ)^{5/6}(\log X)^3 )
\end{equation}
where $p_3$ is a polynomial of degree $3$ with positive leading coefficient. 
\end{enum}

\noindent
For $k\geq 3$, Kowalski and Ricotta \cite{KowRic} considered smooth analogues of the divisor sums $\ES_{\divid_k;X;Q}(A)$, and among other things computed the variance for $Q^{k-1/2+\epsilon}<X<Q^{k-\epsilon}$.

In the function field setting, the analogous problem was studied by Rodgers, Rudnick and the second and third authors \cite{KRRR}.  Let $\MM_n\sub\Fq[t]$ be the set of monic polynomials of degree $n$ with coefficients in $\Fq$.  For a polynomial $Q \in F_{q}[x]$ of degree at least $2$, and $A$ co-prime to $Q$, set 
\begin{equation}
	\ES_{\divid_k,n,Q}(A)
	:=
	\sum_{\substack{f\in \mathcal M_n\\f=A\bmod Q}} \divid_k(f) \;,
\end{equation}
Then for $n\leq k(\deg Q-1)$ the variance of this sum is given by
\begin{equation}
	\lim_{q\to \infty} \frac{\Var_Q(\ES_{\divid_k,n,Q})}{  q^n/|Q|}
	=
	I_k(n;\deg Q-1)
\end{equation}
where $I_k(n;\deg Q-1)$ is a certain matrix integral over the unitary group $U_{\deg Q -1}(\bbC)$ --- see \cite{KRRR} and (\ref{matrix integral def}) below.  This integral can be evaluated in a number of ways.  Some of the main results obtained in \cite{KRRR} are set out in Section 5 below.  It follows from these, for example that, for the classical divisor function $\divid=\divid_2$, if $\deg Q\geq 2$ and $n\leq 2(\deg Q-1)$, then
\begin{equation}
	\lim_{q\to \infty} \frac{\Var_Q(\ES_{\divid_2,n,Q})}{  q^n/|Q|} =
	\begin{cases}
		\mathrm{Pol}_3(n), & n\leq \deg Q-1 \\
		\mathrm{Pol}_3(2(\deg Q-1)-n),& \deg Q\leq n\leq 2(\deg Q-1)
	\end{cases}
\end{equation}
where
$$
	\mathrm{Pol}_3(x) = \binom{x+3}{3} = (x+1)(x+2)(x+3)/6.
$$
For general $k$, the variance may be expressed in terms of a lattice-point count and is a piecewise polynomial function of degree $k^2-1$. 

The function-field expressions for the variance of the generalized divisor function lead immediately to conjectures in the standard number-field setting \cite{KRRR}.  The conjecture when $k=3$ has recently been proved by Rodgers and Soundararajan \cite{RodgersSound}. 

Our aim here is to extend the line of research reviewed above to arithmetic functions associated with other $L$-functions in the same way that the generalized divisor function is related to the Riemann zeta-function. Our function field results (which we will soon state below; Theorem \ref{thm:Legendre-application} and Theorem \ref{thm:main-theorem}) can be used to motivate predictions in the number field setting. 
In order to illustrate these predictions, we focus now on two representative examples: 
elliptic curve $L$-functions and the Ramanujan $L$-function.

Let $E/\bbQ$ be an elliptic curve of conductor $N$ defined over $\bbQ$. The associated $L$-function $F(s)$ will be denoted by $L(s,E)$ and is given by
$$
L(s,E)=\prod_{p|N}(1-a_{p}p^{-s})^{-1}\prod_{p\nmid N}(1-a_{p}p^{-s}+p^{-2s+1})^{-1}
$$
where $a_{p}$ is the difference between $p+1$ and the number of points on the reduced curve mod $p$
$$
a_{p}=p+1-\#\tilde{E}(\bbF_{p}).
$$
When $p|N$, $a_{p}$ is either $1,$ $-1$, or $0$. In general, we have the Hasse bound on $a_{p},$ $|a_{p}| < 2\sqrt{p}$, hence the product converges and gives an analytic function for all $Re(s)>3/2.$
The $L$ function of $E$ expands as
$$
L(s,E)=\sum_{n=1}^{\infty}a_{n}n^{-s}
$$
where 
$$
a_{p^{e}}=p^{e}+1-\#\tilde{E}(\bbF_{p^{e}}).
$$
and for $n=\prod_{i=1}^{r}p_{i}^{e_{i}}$ with $p_{1},\ldots,p_{r}$ distinct primes
$$
a_{n}=a_{p_{1}^{e_{1}}}\cdots a_{p_{r}^{e_{r}}}.
$$
For each positive integer $k$ consider the multiplicative function $d_{k,E}:\bbZ\rightarrow \bbZ$ given by
$$
d_{k,E}:=\prod_{n_{1}\cdots n_{k}=n}a_{n_{1}}\cdots a_{n_{k}}.
$$
Note that $d_{k,E}$ gives the coefficients in the Dirichlet series expansion
$$
L(s,E)^{k}=\sum_{n=1}^{\infty}d_{k,E}(n)n^{-s}.
$$

Our results in the function field setting are analogous to computing the variance of the sum of $d_{k,E}$ in arithmetic progressions
Define the sum over arithmetic progressions
$$
S_{x,c,E}(A):=\sum_{\substack{n\leq x \\ n=A \mod c}}d_{k,E}(n).
$$
Our function field result (see Theorem~\ref{thm:main-theorem} ) leads us to predict that for $x^{\epsilon}<c $, $\epsilon >0$, the following holds:
$$
\mathrm{Var}(S_{x,c,E}) \sim \frac{x}{\phi(c)}a_{k}(L(s,E))\gamma_k(\frac{\log x}{2\log c})(2\log c)^{k^{2}-1}.
$$
with
$$
	\gamma_k(c)
	=
	\frac{1}{k!\, G(1+k)^2}
	\int_{[0,1]^k}
		\delta_c(w_1 + \ldots + w_k)
		\prod_{i< j}(w_i-w_j)^2\,
		d^k w,
$$
with $\delta_c(x) =\delta(x-c)$ being the delta distribution translated by $c$, and $G$ is the Barnes $G$-function, so that for positive integers $k$, $G(1+k) = 1!\cdot 2! \cdot 3! \cdots (k-1)!$. Here $a_{k}(L(s,E))$ is an arithmetic factor that can be written explicitly by (2.5.9) in \cite{CFKRS}.

Note that we can detect the degree of the $L$-function in question as the coefficient of $\log c$.

Another example of a degree-two $L$-function is the Ramanujan $L$-function: 
$$
L(s,\tau )=\prod _{p}\left(1-\tau (p)p^{-s}+p^{-2s+11}\right)^{-1} ,
$$
where $\tau$ is the Ramanujan tau function $\tau :\mathbb {N} \to \mathbb {Z} $ defined by the following identity:
$$ 
\sum _{n\geq 1}\tau (n)q^{n}=q\prod _{n\geq 1}(1-q^{n})^{24},
$$
where $q=\exp(2\pi iz)$.
Ramanujan conjectured (and his conjecture was proved by Deligne) that
$|\tau(p)| \leq 2p^{11/2}$ for all primes $p$,
hence the product converges and gives an analytic function for all $Re(s)>13/2.$
The $L$ function associated with $\tau$ expands as
$$
L(s,\tau )=\sum_{n=1}^{\infty}\tau(n)n^{-s}
$$
For each positive integer $k$ consider the multiplicative function $d_{k,\tau}:\bbZ\rightarrow \bbZ$ given by
$$
d_{k,\tau}:=\prod_{n_{1}\cdots n_{k}=n}\tau(n_{1})\cdots\tau(n_{k}).
$$
Note that $d_{k,E}$ gives the coefficients in the Dirichlet series expansion
$$
L(s,\tau )^{k}=\sum_{n=1}^{\infty}d_{k,\tau}(n)n^{-s}.
$$
Again we are led to speculate that for $x^{\epsilon}<c $, $\epsilon>0$,  if  
$$S_{x,c,\tau}(A):=\sum_{\substack{n\leq x \\ n=A \mod c}}d_{k,\tau}(n) $$
then the following holds:
$$
\mathrm{Var}(S_{x,c,\tau}) \sim \frac{x}{\phi(c)}a_{k}(L(s,\tau))\gamma_k(\frac{\log x}{2\log c})(2\log c)^{k^{2}-1}..
$$
Where $a_{k}(L(s,\tau))$ is an arithmetic factor that can be written explicitly by (2.5.9) in \cite{CFKRS}.

We set out our main results below, but first, by way of illustration, we examine a specific example where the $L$-function in question take a relatively simple and explicit form, and can defined in a self-contained way.  


\subsection{Divisors associated to Legendre curve L-function}

Let $q$ be a power of a prime $p$.  Let $\MM\sub\Fq[t]$ be the subset of monic polynomials and $\II\sub\MM$ be the subset of irreducibles.  For each $n\geq 1$, let $\MM_n\sub\MM$ and $\II_n\sub\II$ be the respective subsets of elements of degree $n$.

Suppose $q$ is odd, and let $E/\Fq(t)$ be the Legendre curve, that is, the elliptic curve with affine model
$$
	y^2 = x(x-1)(x-t).
$$

\ssni
Its $L$-function is given by an Euler product
$$
	L(T,E/\Fq(t))
	=
	\prod_{\pi\in\II}
	L(T^{\deg(\pi)},E/\Fpi)^{-1}
$$
where $\Fpi$ is the residue field $\Fq[t]/\pi\,\Fq[t]$.

Each Euler factor of $L(T,E/\Fq(t))$ is the reciprocal of a polynomial in $\mathbb{Q}[T]$ and satisfies
$$
	T\frac{d}{dT}
	\log L(T,E/\mathbb{F}_\pi)^{-1}
	=
	\sum_{m=1}^\infty a_{\pi^m}T^m
	\in\mathbb{Z}[[T]].
$$
We regard the coefficients $a_{\pi^m}$ as values of the multiplicative function $f\mapsto a_f$ on $\MM$ given by the coefficients of the Dirichlet series expansion
$$
	L(T,E/\Fq(t))
	=
	\sum_{f\in\MM}
	a_f\,T^{\deg(f)}.
$$
Thus $a_1=1$, and if $f=\prod_{i=1}^r\pi_i^{e_i}$ with $\pi_1,\ldots,\pi_r$ distinct elements of $\II$, then
$$
	a_f = a_{\pi^{e_1}}\cdots a_{\pi^{e_r}}.
$$

For each positive integer $k$, consider the multiplicative function $\dkLeg\colon\MM\to\bbZ$ given by
$$
	\dkLeg(f)
	=
	\prod_{\substack{f_1,\ldots,f_k\in\MM\\[0.01in]f_1\cdots f_k=f}}
	a_{f_1}\cdots a_{f_k}.
$$
Equivalently, $\dkLeg$ gives the coefficients in the Dirichlet series expansion
$$
	L(T,E/\Fq(t))^k
	=
	\sum_{f\in\MM}\dkLeg(f)\,T^{\deg(f)}.
$$
It is easy to see that this is a generalization of the $k$th divisor function: replace $L(T,E/\Fq(t))$ by the zeta function $Z(T,\Aone/\Fq)$ so that $f\mapsto a_f$ becomes the constant function $f\mapsto 1$.

Let $\Q\in\MM$ be square free and $\BQ=\BQFq$.  For each $n\geq 1$ and $A\in\BQ$, consider the sum
$$
	S_{k,n,\Q}(A)
	\ :=
	\sum_{\substack{f\in\mathcal{M}_n\\f\equiv A\bmod\Q}}
	\dkLeg(f).
$$
Let $A$ vary uniformly over $\Gamma(\Q)$, and consider the expected value
$$
	\bbE_A[S_{k,n,\Q}(A)]
	:=
	\frac{1}{|\BQ|}
	\sum_{A\in\BQ}
	S_{k,n,\Q}(A)
$$
and the variance
$$
	\mathrm{Var}_A[S_{k,n,\Q}(A)]
	:=
	\frac{1}{|\BQ|}
	\sum_{A\in\BQ}
	|S_{k,n,\Q}(A)-\bbE_A[S_{k,n,\Q}(A)]|^2.
$$
These moments depend on $q$, so one can ask how they behave when we replace $\Fq$ by a finite extension, that is, let $q\to\infty$.  Our main result gives the variance  $\mathrm{Var}_A[S_{k,n,\Q}(A)]$, in the limit $q\to \infty$, in terms of a matrix integral.  Let $U$ be an $R\times R$ matrix.  Let $\std\colon U_R(\bbC)\to\GL_R(\bbC)$ be the representation given by the inclusion $U_R(\bbC)\sub\GL_R(\bbC)$ and
$$
	\wedge^j \std\colon U_R(\bbC)\to \GL_{R_j}(\wedge^j\bbC)
$$
be its $j$th exterior power where $R_j=\binom{R}{j}$; we define $(\wedge^j\,\std)(g)=0$ unless $0\leq j\leq R$.  Define the matrix integrals with respect to Haar measure $d\theta$ on $U_R(\bbC)$ the group of $R\times R$ unitary matrices
\begin{equation}\label{matrix integral def}
	I_{k}(n;R)
	=
	\int_{U_R(\bbC)}
	\Big|
		\sum_{n_1+\cdots +n_k=n}
		\,\,
		\Tr\Big(\big(\otimes_{i=1}^k (\wedge^{n_i}\,\std)\big)(\theta)\Big)
	\Big|^2
	d\theta .
\end{equation}
Then, as a special case of a general theorem we present in this paper, one can prove the following theorem:

\begin{theorem}\label{thm:Legendre-application}
If $\gcd(\Q,t(t-1))=t$ and if $\deg(\Q)\gg 1$, then
$$
	\lim_{q\to\infty}
	\frac{|\BQ|}{q^{2n}}
	\Var_A[S_{k,n,\Q}(A)]
	=
	I_{k}(n;2\deg (c)-1)
$$
\end{theorem}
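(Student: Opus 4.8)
The plan is to diagonalise the variance over the characters of $\BQ$, to identify the resulting twisted sums as coefficients of powers of twisted $L$-functions, and then to invoke the equidistribution of the corresponding unitarised Frobenius conjugacy classes, which is the mechanism underlying the general Theorem~\ref{thm:main-theorem}. First I would apply orthogonality of the characters of the finite abelian group $\BQ$: writing $\psi(n,\chi):=\sum_{f\in\MM_n}\dkLeg(f)\chi(f)$, one has
$$
	S_{k,n,\Q}(A)-\bbE_A[S_{k,n,\Q}(A)]=\frac{1}{|\BQ|}\sum_{\chi\neq\chi_0}\bar\chi(A)\,\psi(n,\chi),
$$
so that Plancherel's identity on $\BQ$ gives
$$
	\Var_A[S_{k,n,\Q}(A)]=\frac{1}{|\BQ|^2}\sum_{\chi\neq\chi_0}|\psi(n,\chi)|^2 .
$$
It therefore suffices to control $\psi(n,\chi)$ for each nontrivial $\chi$.

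Next, because $f\mapsto\dkLeg(f)\chi(f)$ is multiplicative and $\chi$ completely multiplicative, the Euler product identifies the generating series of $\dkLeg(f)\chi(f)$ with $L(T,E/\Fq(t)\otimes\chi)^k$; the only points that require care are the places dividing $\Q$ and the place at $\infty$, where the Legendre curve has good or multiplicative reduction at the finite places and additive reduction at $\infty$, so that the corresponding Euler factors of the twist are trivial. By Grothendieck's trace formula and Deligne's purity, for $\chi$ primitive modulo $\Q$ this is a polynomial
$$
	L(T,E/\Fq(t)\otimes\chi)=\det\big(1-qT\,\Theta_\chi\big),\qquad\Theta_\chi\in U_R(\bbC),
$$
whose degree $R$ equals $\deg\mathfrak{n}(E\otimes\chi)-4$ by the conductor--degree formula. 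The local conductor exponents are $2\deg\pi$ at each $\pi\mid\Q$ with $\pi\neq t$ (good reduction twisted by a tame ramified character), $2$ at $t=0$ (multiplicative reduction twisted by a ramified character, since $t\mid\Q$), $1$ at $t=1$ (multiplicative reduction twisted by an unramified character, since $t-1\nmid\Q$), and $2$ at $\infty$ (additive reduction). Summing, $\deg\mathfrak{n}(E\otimes\chi)=2\deg(\Q)+3$, hence $R=2\deg(\Q)-1$; this is precisely where the hypothesis $\gcd(\Q,t(t-1))=t$ is used --- either $t\nmid\Q$ or $t-1\mid\Q$ would shift $R$. Expanding $\det(1-qT\Theta_\chi)=\sum_j(-qT)^j\Tr\big((\wedge^j\std)(\Theta_\chi)\big)$, taking the $k$th power and reading off the coefficient of $T^n$ gives
$$
	\psi(n,\chi)=(-1)^n q^n\sum_{n_1+\cdots+n_k=n}\Tr\Big(\big(\otimes_{i=1}^k(\wedge^{n_i}\std)\big)(\Theta_\chi)\Big),
$$
and therefore
$$
	\frac{|\BQ|}{q^{2n}}\Var_A[S_{k,n,\Q}(A)]
	=\frac{1}{|\BQ|}\sum_{\chi\neq\chi_0}\Big|\sum_{n_1+\cdots+n_k=n}\Tr\Big(\big(\otimes_{i=1}^k(\wedge^{n_i}\std)\big)(\Theta_\chi)\Big)\Big|^2 .
$$

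Then I would split the characters into the primitive ones and the rest. Since $\Q$ is squarefree there are only $O_{\deg\Q}\big(q^{\deg\Q-1}\big)$ imprimitive (or otherwise exceptional) characters; for each of these $\Theta_\chi$ still lies in a unitary group of rank $\le R$, so the summand is bounded independently of $q$ and their total contribution is $O_{k,n,\deg\Q}(q^{-1})\to0$. For the primitive characters one invokes the equidistribution statement supplied by Theorem~\ref{thm:main-theorem}: under $\gcd(\Q,t(t-1))=t$ and $\deg(\Q)\gg1$ the geometric monodromy group of the family $\{\FF\otimes\LL_\chi\}_\chi$ (with $\FF$ the rank-$2$ middle-extension sheaf attached to $E$ and $\LL_\chi$ the rank-$1$ sheaf attached to $\chi$) is as large as it can be --- in particular the twists carry no auto-duality for generic $\chi$ --- so the classes $\Theta_\chi$ become equidistributed in $U_R(\bbC)$ for Haar measure as $q\to\infty$. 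The map
$$
	g\ \longmapsto\ \Big|\sum_{n_1+\cdots+n_k=n}\Tr\Big(\big(\otimes_{i=1}^k(\wedge^{n_i}\std)\big)(g)\Big)\Big|^2
$$
is a fixed continuous class function on $U_R(\bbC)$ --- a finite linear combination of irreducible characters --- so Weyl's equidistribution criterion applies and the average over primitive $\chi$ converges to
$$
	\int_{U_R(\bbC)}\Big|\sum_{n_1+\cdots+n_k=n}\Tr\Big(\big(\otimes_{i=1}^k(\wedge^{n_i}\std)\big)(\theta)\Big)\Big|^2 d\theta
	\ =\ I_k(n;2\deg(\Q)-1).
$$
Combined with the negligible contribution of the remaining characters and the fact that the primitive ones make up a proportion $1-O(q^{-1})$ of $\BQ$, this gives the claimed limit.

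The main obstacle is the equidistribution input: proving that the geometric monodromy of the one-parameter-in-$\chi$ family $\FF\otimes\LL_\chi$ is the full general linear group, fixed by no auto-duality, so that the finite character average may legitimately be replaced by the Haar integral over $U_R(\bbC)$. That is the deep geometric ingredient, here packaged into the general theorem; the remainder is the bookkeeping above, whose only Legendre-specific ingredient is the conductor computation yielding $R=2\deg(\Q)-1$ together with the verification that $\gcd(\Q,t(t-1))=t$ is the right instance of the genericity hypothesis of Theorem~\ref{thm:main-theorem}.
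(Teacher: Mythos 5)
Your proposal is correct and follows essentially the same route as the paper: the paper proves Theorem~\ref{thm:Legendre-application} by observing (Remark~\ref{rmk:proof-of-legendre-theorem}) that the Legendre Tate module satisfies the hypotheses of the general Theorem~\ref{thm:main-theorem} --- big monodromy under $\gcd(\Q,t(t-1))=t$ and $\deg\Q\gg1$, cited from [HKRG~\S8], together with $R=2\deg\Q-1$ and weight $w=1$ --- and you have unrolled the proof of that general theorem (orthogonality of characters, identification of $\psi(n,\chi)$ with coefficients of $\LC(T,\rho\otimes\chi)^k$, equidistribution of the unitarized Frobenii) into the specific case while also supplying the conductor computation the paper only cites.

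One small imprecision worth noting: when you discard the non-primitive characters you assert that ``$\Theta_\chi$ still lies in a unitary group of rank $\le R$.'' That is not quite right in the paper's taxonomy --- for what the paper calls \emph{mixed} characters the unitarized $L$-polynomial is not the characteristic polynomial of a unitary matrix, and for \emph{heavy} characters the $L$-function is not even a polynomial (so no $\Theta_\chi$ exists). The correct statement, which still makes your error term go to zero, is that for non-heavy $\chi$ the zeros of $\LC(T,\rho\otimes\chi)$ have modulus $\ge q^{-(1+w)/2}$ by Deligne's bound, so $|\psi(n,\chi)|^2=O(q^{n(1+w)})$ with an implied constant depending only on $n,k,R$; this is Lemma~\ref{eval coeff}/Corollary~\ref{eval coeff k} in the paper. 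Heavy characters would give a worse bound $O(q^{n(2+w)})$, and the paper excludes them via the hypothesis $\PhiQHeavy\rho\subseteq\{\one\}$, which for the Legendre curve holds because for nontrivial $\chi$ the twisted sheaf is geometrically irreducible and not a Tate twist, so its $H^2_c$ vanishes. Incorporating those two clarifications, your argument matches the paper's.
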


See Remark~\ref{rmk:proof-of-legendre-theorem} for a brief explanation of how this follows from our general theorem.  The factor 2 multiplying $\deg(c)$ corresponds to the degree of the Legendre curve $L$-function.

\begin{cor}
In the case of $k=2$ we have for $n\leq 2\deg(c)-1$
$$
	\lim_{q\to\infty}
	\frac{|\BQ|}{q^{2n}}
	\Var_A[S_{2,n,\Q}(A)]
	=
	{n+3\choose 3}
$$
and for $2\deg(c)-1 < n < 4\deg(c)-2$
$$
	\lim_{q\to\infty}
	\frac{|\BQ|}{q^{2n}}
	\Var_A[S_{2,n,\Q}(A)]
	=
	{4 \deg c -n+1\choose 3}.
$$
\end{cor}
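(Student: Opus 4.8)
The plan is to deduce the corollary directly from Theorem~\ref{thm:Legendre-application}, leaving only the evaluation of the matrix integral $I_{2}(n;R)$ with $R=2\deg(c)-1$. By that theorem,
$$
\lim_{q\to\infty}\frac{|\BQ|}{q^{2n}}\Var_A[S_{2,n,\Q}(A)]=I_{2}(n;2\deg(c)-1),
$$
so everything reduces to computing $I_{2}(n;R)=\int_{U_R(\bbC)}\big|\sum_{n_1+n_2=n}\Tr((\wedge^{n_1}\std\otimes\wedge^{n_2}\std)(\theta))\big|^2\,d\theta$. I would first observe that $\Tr((\wedge^{n_1}\std\otimes\wedge^{n_2}\std)(\theta))=\Tr(\wedge^{n_1}\std)(\theta)\,\Tr(\wedge^{n_2}\std)(\theta)$ and that $\sum_{j}\Tr(\wedge^{j}\std)(\theta)\,x^{j}=\det(I+x\theta)$; hence the inner sum is the coefficient of $x^{n}$ in $\det(I+x\theta)^{2}$, i.e.\ the character of the representation $\wedge^{n}(\std\oplus\std)$ of $U_R(\bbC)$ evaluated at $\theta$. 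Consequently $I_{2}(n;R)$ is the squared $L^{2}$-norm of this character, which equals the sum over irreducible constituents of $\wedge^{n}(\std\oplus\std)$ of the squares of their multiplicities.

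This is precisely the matrix integral occurring in the classical ($k=2$, degree-one) case treated in \cite{KRRR}, with the parameter $R$ playing the role of $\deg Q-1$; its value is recorded in Section~5 and in the Introduction as the function $\mathrm{Pol}_3$. Concretely, decomposing $\wedge^{n}(\std\oplus\std)=\bigoplus_{a+b=n}\wedge^{a}\std\otimes\wedge^{b}\std$ and applying the Pieri rule to the one-column representations of $U_R(\bbC)$, one finds $I_{2}(n;R)=\binom{n+3}{3}$ for $0\le n\le R$ and $I_{2}(n;R)=\binom{2R-n+3}{3}$ for $R\le n\le 2R$ (and $0$ for $n>2R$). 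Substituting $R=2\deg(c)-1$ yields $\binom{n+3}{3}$ for $n\le 2\deg(c)-1$ and, since then $2R-n+3=4\deg(c)-n+1$, the value $\binom{4\deg(c)-n+1}{3}$ for $2\deg(c)-1<n<4\deg(c)-2$, which is the assertion.

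The only genuine content beyond invoking Theorem~\ref{thm:Legendre-application} is the evaluation of $I_{2}(n;R)$, and this is already available in \cite{KRRR}; so the main (and essentially clerical) obstacle, were one to reprove it here, is bookkeeping the Pieri decomposition of $\wedge^{n}(\std\oplus\std)$ into irreducibles of $U_R(\bbC)$ and checking that the resulting count of squared multiplicities collapses to the stated binomial coefficient. I would also note that the hypotheses $\gcd(\Q,t(t-1))=t$ and $\deg(\Q)\gg1$ of Theorem~\ref{thm:Legendre-application} are in force, and that the two stated ranges of $n$ exhaust the region $n\le 2(2\deg(c)-1)$ in which the integrand is not identically zero.
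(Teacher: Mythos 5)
Your proof is correct and follows essentially the same route as the paper: apply Theorem~\ref{thm:Legendre-application} to reduce to the matrix integral $I_2(n;R)$ with $R=2\deg(c)-1$, then evaluate it using the $k=2$ case of Theorem~\ref{Thm: Ik(m,N) for large m} (equivalently, the closed forms from \cite{KRRR}). The extra representation-theoretic commentary (identifying the integrand with the character of $\wedge^n(\std\oplus\std)$ and the Pieri-rule bookkeeping) is consistent with how \cite{KRRR} derives those formulae but is not needed once Theorem~\ref{Thm: Ik(m,N) for large m} is invoked; and your observation that the two formulae agree at the boundary $n=R$ (both giving $\binom{R+3}{3}$) correctly handles the fact that the corollary's first range uses $n\leq R$ while Theorem~\ref{Thm: Ik(m,N) for large m}(ii) is stated for $n<R$.
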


\ssni
The Corollary will follow from evaluating the matrix integral in \S\ref{integral}.


\section{Statement of Main Theorem}

Let $\Kbar$ be an algebraic closure of $K=\Fq(t)$ and $\GK$ be the Galois group $\Gal(\Kbar/K)$.  We regard $\PP=\II\cup\{\infty\}$ as the set of places of $K$, and for each $v\in\PP$, we write $\Fv$ for the residue field and $\deg(v)$ for its degree over $\Fq$.  We also fix inertia and decomposition groups $I(v)\sub D(v)\sub\GK$ respectively and write $\Gv$ for the quotient group $D(v)/I(v)$ and $\Frob_v\in\Gv$ for the Geometric Frobenius element.

For each finite subset $\SS\sub\PP$, let $\KS\seq\Kbar$ be the maximal subfield which is unramified over $\PP\ssm\SS$ and $\GKS$ be the Galois group $\Gal(\KS/K)$.  We abuse notation and write $I(v)\seq D(v)$ for the images of $I(v)\sub D(v)\sub\GK$ in $\GKS$ via the canonical quotient $\GK\onto\GKS$.  If $v\in\SS$, then $I(v)\seq\GKS$ is isomorphic to $I(v)\sub\GK$, and otherwise $I(v)\seq\GKS$ is the trivial group.

Let $\ell$ be a prime distinct from $p$ and $V$ be a finite-dimensional $\Qellbar$-vector space.  Consider a Galois representation
$$
	\rho\colon\GKS\to\GL(V),
$$

\ssni
that is, a continuous group homomorphism.  For each $v\in\PP$, let $V_v=V^{\rho(I(v))}$. We say that $s$ is the conductor of $\rho$ if $s$ is a square-free monic polynomial divisible by prime polynomials $v$ for which $V_v$ is strictly smaller than $V.$
Let
$$
	\rholv\colon\Gv\to\GL(V_v)
$$
be the composition of the restriction of $\rho$ to $D(v)$ and the quotient $D(v)\onto\Gv$, and let
$$
	L(T,\rho_v) := \det\left(1 - T\,\rho(\Frob_v)\mid V_v\right).
$$

We attach several $L$-functions to $\rho$.  One is the complete $L$-function and is given by the Euler product
\begin{equation}\label{eqn:euler-product-for-L}
	L(T,\rho) := \prod_{v\in\PP}L(T^{\deg(v)},\rho_v)^{-1}.
\end{equation}
Another is a partial $L$-function given by the Euler product
$$
	\Lf(T,\rho) := \prod_{\pi\in\II}L(T^{\deg(\pi)},\rho_\pi)^{-1}
$$
where now the Euler product is taken over the `finite' places of $K$.  If $L(T,\rho_\infty)$ is trivial, that is, if $V_\infty=\ZeroSpace$, then these $L$-functions coincide, but in general
$$
	\Lf(T,\rho) = L(T,\rho)\cdot L(T,\rho_\infty).
$$

One reason for considering the partial $L$-function $\Lf(T,\rho)$ is that we can expand its Euler product as a Dirichlet series
$$
	\Lf(T,\rho) = \sum_{f\in\MM}a_f T^{\deg(f)}
$$

\ssni
where $f\mapsto a_f$ is the multiplicative function $\MM\to\Qellbar$ given on prime powers by writing
$$
	T\frac{d}{dT}\log(L(T,\rho_\pi)^{-1})
	=
	\sum_{m=1}^\infty a_{\pi^m}T^m.
$$
Equivalently, for each $\pi\in\II$ and $m\geq 1$, we have
$$
	a_{\pi^m} = \Tr(\rho_v(\Frob_\pi)^m\mid V_v);
$$
and if $f=\prod_{i=1}^r\pi_i^{e_i}$ is a prime factorization in $\Fq[t]$ with $\pi_i\neq\pi_j$ for $i\neq j$, then
$$
	a_f = a_{\pi_1^{e_1}}\cdots a_{\pi_r^{e_r}}
$$
where $a_1=1$.

For each positive integer $k$, we define the \defi{$k$th divisor function of $\rho$} as follows: it is the multiplicative function $\dk\colon\MM\to\Qellbar$ given by
$$
	\dk(f)
	=
	\prod_{\substack{f_1,\ldots,f_k\in\MM\\[0.01in]f_1\cdots f_k=f}}
	a_{f_1}\cdots a_{f_k}.
$$
When $\rho$ is the trivial representation $\one$ (and thus $\dim(V)=1$), then this is the usual $k$ divisor function on $\MM$:
$$
	d_{k,\one}(f)
	=
	\left|\left\{\,
		f_1,\ldots,f_k\in\MM
		:
		f_1\cdots f_k=f
	\,\right\}\right|
$$

\ssni
In general, we have the identity
$$
	\Lf(T,\rho)^k
	=
	\sum_{f\in\MM} \dk(f)\,T^{\deg(f)}
	=
	\sum_{n=0}^\infty\left(\sum_{f\in\MM_n}\dk(f)\right)T^n.
$$

Let $\CC\sub\PP$ be a finite subset containing $\infty$ and
\begin{equation}\label{eqn:euler-product-of-L_C}
	\LC(T,\rho) := \prod_{\pi\not\in\CC}L(T^{\deg(\pi)},\rho_\pi)^{-1}
\end{equation}
where $\pi$ runs over $\II$ and let 
\begin{equation}
R=\deg(\LC(T,\rho)).
\end{equation}
Thus $\Lf(T,\rho)=L_{\{\infty\}}(T,\rho)$, and in general,
\begin{equation}\label{eqn:L_CC-to-d_k}
	\LC(T,\rho)^{k}
	=
	\sum_{n=0}^\infty\Big(
		\sum_{\substack{f\in\MM_n\\[0.02in]\gcd(f,\Q)=1}}\dk(f)
	\Big)T^n
\end{equation}
where $\Q\in\MM$ is the product of all primes $\pi\in\II\cap\CC$.

Let $\BQ$ be the finite abelian group $\BQFq$, and for each $A\in\BQ$, let
$$
	\SnAQ := \sum_{f\in\MM_n(A)}\dk(f)
$$
where
$$
	\MM_n(A) := \{\,f\in\MM_n : f\equiv A\bmod\Q\,\}.
$$
We regard $A$ as a random variable uniformly distributed over $\BQ$ and define its expected value
$$
	\bbE_A[\SnAQ]
	:=
	\frac{1}{|\BQ|}\sum_{A\in\BQ}\SnAQ
$$
and variance
$$
	\Var_A[\SnAQ]
	:=
	\frac{1}{|\BQ|}\sum_{A\in\BQ}\left|\SnAQ-\bbE_A[\SnAQ]\right|^2
$$
accordingly.

It follows easily the definition and \eqref{eqn:L_CC-to-d_k} that
$$
	\LC(T,\rho)
	=
	|\BQ|\cdot
	\sum_{n=0}^\infty
	\bbE_A[\SnAQ]\,T^n,
$$
so each expected value is the coefficient of an $L$-series.  The main goal of this paper is to analyze the asymptotic behavior of $|\Var_A[\SnAQ]|^2$ as $q\to\infty$, that is, as we replace $q$ by a power $q^r$ and take $r\to\infty$.  To do so, we must impose some hypotheses on $\rho$, e.g., we suppose that $\rho$ is punctually pure of weight $w$ (see section $6$ in \cite{HKRG}).  We also impose hypotheses on the Mellin transform of $\rho$, but before doing so we need some additional notation and terminology.

Let $\PhiQ$ be the finite abelian group $\Hom(\BQ,\Qellbar^\times)$.  For each $\dc\in\PhiQ$, there is a corresponding Dirichlet character
$$
	\dc\colon\GKC\to\GL(\Qellbar)
$$

\ssni
which we regard as a representation of $\GKR$ for $\RR=\CC\cup\SS$ by composing $\dc$ with the quotient $\GKR\onto\GKC$.  We also regard $\rho$ as a representation of $\GKR$ via the quotient $\GKR\onto\GKS$, and we define the tensor-product representation
$$
	\rhochi\colon\GKR\to\GL(V_\varphi)
$$
where $V_\varphi=V$ and $f\mapsto\rho(f)\varphi(f)$.

Let $L(T,\rhochi)$ and $\LC(T,\rhochi)$ be the $L$-functions respectively defined by the Euler products in \eqref{eqn:euler-product-for-L} and \eqref{eqn:euler-product-of-L_C} with $\rhochi$ in lieu of $\dc$.  A priori each, of these is a power series with coefficients in $\Qellbar$, but Grothendieck showed both are rational functions in $\Qellbar(T)$, compare (1.4.7) of \cite{Deligne:WeilII}.  We say that $\dc\in\PhiQ$ is \defi{good for $\rho$} iff it lies in the set
\begin{equation}\label{good}
\PhiQGood\rho
	:=
	\{\,
		\dc\in\PhiQ
		:
		L(T,\rhochi) = \LC(T,\rhochi)\in\Qellbar[T]
	\,\},
\end{equation}
and otherwise we say that $\rho$ is \defi{bad for $\rho$}.

The hypothesis that $\rho$ is punctually pure implies that both $L$-functions lie in $\Qbar(T)$, and $\dc$ is good for $\rho$ iff every zero $\alpha\in\Qbar$ of $\LC(T,\rhochi)$ satisfies $|\iota(\alpha)|^2=1/q^{1+w}$ for every field embedding $\iota\colon\Qbar\to\bbC$.  Equivalently, $\dc$ is good for $\rho$ iff the `unitarized' $L$-function
$$
	\LCStar(T,\rhochi) := \LC(T/(\sqrt{q})^{1+w},\rhochi)
$$

\ssni
is the characteristic polynomial of a unitary conjugacy class $\trhochi\sub U_R(\bbC)$ for $R=\deg(\LC(T,\rho))$.

We further distinguish bad characters by saying that $\rho\in\PhiQ$ is \defi{mixed for $\rho$} iff it lies in the set
\begin{equation}\label{mixed}
\PhiQMixed\rho
	:=
	\{\,
		\dc\in\PhiQ\ssm\PhiQGood\rho
		:
		\LC(T,\rhochi)\in\Qellbar[T]
	\,\},
\end{equation}
and otherwise we say that the elements of
\begin{equation}\label{heavy}
	\PhiQHeavy\rho
	:=
	\PhiQ\ssm(\PhiQGood\rho\cup\PhiQMixed\rho)
\end{equation}
are \defi{heavy for $\rho$}.  The mixed characters are those for which $\LCStar(T,\rhochi)$ is not the characteristic polynomial of a unitary matrix, and the heavy characters are those for which $\LCStar(T,\rhochi)$ is not even a polynomial.



We are now in a position to state our main theorem:

\begin{theorem}\label{thm:main-theorem}
Suppose that $\rho$ is punctually pure of weight $w$, that its Mellin transform has big monodromy (see section \ref{Big Monodromy}), and that $\PhiQHeavy\rho\seq\{\one\}$ for all $q$. Then 
$$
	\lim_{q\to\infty}\frac{|\PhiQ|}{q^{n(1+w)}}\cdot \Var_A[S_{k,n,\Q}(A)]
	=
	I_{k}(n;R)
$$
for each $n\geq 1$.
\end{theorem}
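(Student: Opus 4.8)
\emph{Overview.} The plan is to reduce $\Var_A[S_{k,n,\Q}(A)]$, via orthogonality of Dirichlet characters, to an average over $\dc\in\PhiQ$ of squared coefficients of the twisted $L$-functions $\LC(T,\rhochi)^k$; to identify the contribution of the \emph{good} $\dc$ with the integrand of $I_k(n;R)$ evaluated at the unitarized Frobenius class $\trhochi$; to discard the bad $\dc$ using Deligne's weight bound and the sparsity of the bad locus; and finally to apply the equidistribution of the classes $\trhochi$ supplied by the big monodromy hypothesis.

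\emph{Harmonic expansion.} Since every $A\in\BQ$ is a unit modulo $\Q$, the congruence $f\equiv A\bmod\Q$ already forces $\gcd(f,\Q)=1$, so orthogonality of the characters of $\BQ$ gives $S_{k,n,\Q}(A)=|\PhiQ|^{-1}\sum_{\dc\in\PhiQ}\bar\dc(A)\,c_{\dc,n}$ with $c_{\dc,n}:=\sum_{f\in\MM_n,\,\gcd(f,\Q)=1}\dk(f)\dc(f)$. Because $d_{k,\rhochi}(f)=\dk(f)\dc(f)$ for $\gcd(f,\Q)=1$, equation \eqref{eqn:L_CC-to-d_k} applied to $\rhochi$ yields $c_{\dc,n}=[T^n]\LC(T,\rhochi)^k$, while the $\dc=\one$ term equals $|\PhiQ|\,\bbE_A[S_{k,n,\Q}(A)]$. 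Parseval's identity on $\BQ$ therefore gives $\Var_A[S_{k,n,\Q}(A)]=|\PhiQ|^{-2}\sum_{\dc\neq\one}|c_{\dc,n}|^2$, and after dividing by $q^{n(1+w)}/|\PhiQ|$,
$$
	\frac{|\PhiQ|}{q^{n(1+w)}}\,\Var_A[S_{k,n,\Q}(A)]
	=\frac{1}{|\PhiQ|}\sum_{\dc\neq\one}\Bigl|\,(\sqrt{q})^{-n(1+w)}\,[T^n]\LC(T,\rhochi)^k\,\Bigr|^2 .
$$

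\emph{Good characters and the matrix-integral integrand.} If $\dc$ is good then $\LCStar(T,\rhochi)=\det(1-T\,\trhochi)$ with $\trhochi\subset U_R(\bbC)$, and $\LC(T,\rhochi)=\LCStar\bigl((\sqrt q)^{1+w}T,\rhochi\bigr)$, so, using $\det(1-Tg)^k=\sum_m(-T)^m\sum_{m_1+\cdots+m_k=m}\Tr\bigl((\otimes_{i=1}^k(\wedge^{m_i}\std))(g)\bigr)$ together with the convention $(\wedge^j\std)(g)=0$ for $j>R$,
$$
	(\sqrt{q})^{-n(1+w)}[T^n]\LC(T,\rhochi)^k=[T^n]\LCStar(T,\rhochi)^k
	=(-1)^n\sum_{n_1+\cdots+n_k=n}\Tr\Bigl(\bigl(\otimes_{i=1}^k(\wedge^{n_i}\std)\bigr)(\trhochi)\Bigr).
$$
Hence the summand attached to a good $\dc$ is exactly $F(\trhochi)$, where $F(\theta):=\bigl|\sum_{n_1+\cdots+n_k=n}\Tr\bigl((\otimes_{i=1}^k(\wedge^{n_i}\std))(\theta)\bigr)\bigr|^2$ is the integrand defining $I_k(n;R)$. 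For $\dc$ mixed, $\LCStar(T,\rhochi)\in\bbC[T]$ has all roots $\beta_j$ of modulus $\le1$ (Deligne's weight bound for $H^1_c$ of the pure sheaf $\rhochi$), so $|[T^n]\LCStar(T,\rhochi)^k|\le\binom{kR}{n}$, a bound independent of $q$; heavy $\dc$ lie in $\{\one\}$ by hypothesis and do not occur in the sum. Since the bad locus is a proper closed subscheme of the character variety, $|\PhiQMixed\rho|=o(|\PhiQ|)$ as $q\to\infty$ (see \S\ref{Big Monodromy} and \cite{HKRG}), and the same bound shows the omission of the $\dc=\one$ term costs only $O(1/|\PhiQ|)$. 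Consequently the right-hand side above equals $|\PhiQ|^{-1}\sum_{\dc\neq\one,\ \mathrm{good}}F(\trhochi)+o(1)$. Now the big monodromy hypothesis, through Deligne's equidistribution theorem in the form established in \cite{HKRG}, gives that $\{\trhochi:\dc\in\PhiQGood\rho\}$ becomes equidistributed in $U_R(\bbC)$ for Haar measure $d\theta$; since $F$ is a fixed continuous class function and $|\PhiQGood\rho|/|\PhiQ|\to1$, we obtain $|\PhiQ|^{-1}\sum_{\dc\neq\one,\ \mathrm{good}}F(\trhochi)\to\int_{U_R(\bbC)}F(\theta)\,d\theta=I_k(n;R)$, which is the claim.

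\emph{Main obstacle.} The steps above are formal once the two inputs from \S\ref{Big Monodromy} and \cite{HKRG} are granted; the real content lies precisely there, namely in showing that ``big monodromy of the Mellin transform of $\rho$'' forces the geometric monodromy group governing the family $\dc\mapsto\LCStar(T,\rhochi)$ to be the \emph{full} unitary group $U_R$ (rather than a proper subgroup such as $\Sp_R(\bbC)$, an orthogonal subgroup, or a finite group), so that Deligne's equidistribution theorem produces Haar measure on $U_R(\bbC)$, and that the locus of bad $\dc$ is genuinely lower-dimensional. Verifying the big monodromy hypothesis in concrete cases (e.g.\ for the Legendre curve, giving Theorem~\ref{thm:Legendre-application}) is where the work is; the reduction of the variance to a $U_R$-integral, carried out above, is comparatively soft.
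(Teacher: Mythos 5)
Your argument is correct and follows essentially the same route as the paper: use orthogonality of Dirichlet characters to write $\Var_A[S_{k,n,\Q}(A)]=|\PhiQ|^{-2}\sum_{\dc\neq\one}|c_{\dc,n}|^2$, identify the coefficient of $\LCStar(T,\rhochi)^k$ for good $\dc$ with the integrand of $I_k(n;R)$ via exterior-power traces, bound the mixed characters using the weight inequality on the zeros so their contribution is $o(1)$ after dividing by $q^{n(1+w)}$ (aided by $|\PhiQMixed\rho|=o(|\PhiQ|)$), discard heavy characters by the hypothesis $\PhiQHeavy\rho\seq\{\one\}$, and conclude with the equidistribution theorem supplied by big monodromy. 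The only cosmetic difference is that the paper isolates the composition decomposition $\ckn\rhochi=\sum_{n_1+\cdots+n_k=n}\prod_i c_{1,\rhochi,n_i}$ and the Lemma on good-character coefficients as separate lemmas, whereas you carry out the same expansion directly from $\det(1-T\theta)^k$; the content is identical.
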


\begin{remark}\label{rmk:proof-of-legendre-theorem}
If $\rho$ is the representation associated to the $\ell$-adic Tate module of the Legendre curve, then the hypotheses on $\Q$ in Theorem~\ref{thm:Legendre-application} imply that the Mellin transform of $\rho$ has big monodromy (see \cite[\S8]{HKRG}).  One can also show that $R=2\deg(\Q)-1$ (cf.~\loccit).  Thus Theorem~\ref{thm:Legendre-application} follows from Theorem~\ref{thm:main-theorem}.
\end{remark}

\begin{cor}
In the case of $k=2$ we have for $n\leq R$
$$
	\lim_{q\to\infty}
	\frac{|\BQ|}{q^{2n}}
	\Var_A[S_{2,n,\Q}(A)]
	=
	{n+3\choose 3}
$$
and for $R<n<2R$
$$
	\lim_{q\to\infty}
	\frac{|\BQ|}{q^{2n}}
	\Var_A[S_{2,n,\Q}(A)]
	=
	{2R-n+3\choose 3}
$$
\end{cor}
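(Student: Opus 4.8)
The plan is to deduce the corollary from Theorem~\ref{thm:main-theorem} together with an explicit evaluation of the matrix integral $I_2(n;R)$ of \eqref{matrix integral def}. Since $\PhiQ=\Hom(\BQ,\Qellbar^\times)$ is the Pontryagin dual of the finite abelian group $\BQ$ we have $|\PhiQ|=|\BQ|$, and $q^{2n}$ is the specialisation of $q^{n(1+w)}$ at $w=1$; so for a weight-one $\rho$ satisfying the hypotheses of Theorem~\ref{thm:main-theorem} that theorem already yields $\lim_{q\to\infty}\tfrac{|\BQ|}{q^{2n}}\Var_A[S_{2,n,\Q}(A)]=I_2(n;R)$. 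Everything then reduces to the purely combinatorial assertion that $I_2(n;R)=\binom{n+3}{3}$ for $n\le R$ and $I_2(n;R)=\binom{2R-n+3}{3}$ for $R<n<2R$. This is the same integral studied in \cite{KRRR}, and I will carry out the evaluation in \S\ref{integral}.

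First I will unwind the integrand. Because the character of a tensor product is the product of the characters, $\Tr\bigl((\wedge^{n_1}\std\otimes\wedge^{n_2}\std)(\theta)\bigr)=e_{n_1}(\theta)\,e_{n_2}(\theta)$, where $e_j(\theta):=\Tr\bigl((\wedge^j\std)(\theta)\bigr)$ is the $j$th elementary symmetric function of the eigenvalues of $\theta$. Hence, for $k=2$, the inner sum in \eqref{matrix integral def} is the coefficient of $T^n$ in $\bigl(\sum_{j\ge0}e_j(\theta)T^j\bigr)^2=\det(1+T\theta)^2$. I will then expand $\det(1+T\theta)^2$ in Schur functions via the dual Cauchy identity with its two auxiliary variables both set equal to $T$, namely $\det(1+T\theta)^2=\sum_{\lambda:\lambda_1\le2}s_{\lambda'}(1,1)\,T^{|\lambda|}\,s_\lambda(\theta)$, where $\lambda'$ denotes the conjugate partition and $s_\lambda(\theta)=0$ once the number of parts $\ell(\lambda)$ exceeds $R$. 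Extracting the coefficient of $T^n$ and using that $\{\,s_\lambda:\ell(\lambda)\le R\,\}$ are the irreducible polynomial characters of $U_R(\bbC)$, hence orthonormal against Haar measure, I obtain
$$
	I_2(n;R)=\sum_{\substack{|\lambda|=n,\ \lambda_1\le2\\ \ell(\lambda)\le R}}s_{\lambda'}(1,1)^2
	=\sum_{\substack{a,b\ge0,\ 2a+b=n\\ a+b\le R}}(b+1)^2,
$$
the second equality coming from writing a partition with largest part $\le2$ as $\lambda=(2^a,1^b)$, noting $\lambda'=(a+b,a)$ and $s_{(a+b,a)}(1,1)=b+1$ (the dimension of the corresponding $\GL_2(\bbC)$-representation), while $\ell(\lambda)\le R$ becomes $a+b\le R$.

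The remaining step is to evaluate this sum. Putting $b=n-2a$, the index $a$ runs over $\max(0,n-R)\le a\le\lfloor n/2\rfloor$. When $n\le R$ the lower bound is $0$, and the summands $(n-2a+1)^2$ range over the squares of the positive integers $\le n+1$ of the parity of $n+1$, whose sum (a short computation with the sum of the first few odd, resp.\ even, squares) equals $\binom{n+3}{3}$. When $R<n<2R$, the substitution $a\mapsto a-(n-R)$ rewrites the sum as the $n\le R$ expression with $n$ replaced by $2R-n\;(<R)$, giving $\binom{2R-n+3}{3}$; equivalently this reflects the symmetry $I_2(n;R)=I_2(2R-n;R)$, which follows from $\overline{\det(1+T\theta)^2}=(\det\theta)^{-2}T^{2R}\det(1+T^{-1}\theta)^2$ for $\theta\in U_R(\bbC)$ (and which matches the lattice-point description of $I_k(n;R)$ in \cite{KRRR}).

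I do not expect a genuine obstacle: granted Theorem~\ref{thm:main-theorem}, the corollary is a standard symmetric-function computation of exactly the type carried out in \cite{KRRR}. The one point that needs care is the truncation $\ell(\lambda)\le R$ in the Schur expansion — it is precisely this constraint that produces the break at $n=R$ and the mirror symmetry $n\leftrightarrow 2R-n$, and dropping it would (wrongly) give $\binom{n+3}{3}$ for every $n<2R$.
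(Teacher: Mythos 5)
Your argument is correct and follows the same route as the paper: reduce via Theorem~\ref{thm:main-theorem} (using $|\PhiQ|=|\BQ|$ and $w=1$, so $q^{n(1+w)}=q^{2n}$) to the evaluation of $I_2(n;R)$, and then apply the $k=2$ case of Theorem~\ref{Thm: Ik(m,N) for large m}. The paper simply cites \cite{KRRR} for that matrix-integral formula, whereas you supply a self-contained Schur-function derivation via the dual Cauchy identity and orthogonality, correctly tracking the truncation $\ell(\lambda)\leq R$ that produces the break at $n=R$ and the symmetry $n\leftrightarrow 2R-n$; this fills in the step the paper delegates, but is not a different method.
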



\section{Big Monodromy and Equidistribution}\label{Big Monodromy}


In \cite[\S10]{HKRG}, we defined a subgroup $\PhiUNu\seq\PhiQ$, and for each coset $\dc\PhiUNu$, we defined a monodromy group
$$
	\Ggeom{\rho}{\dc\PhiUNu}
	\seq
	\GL_R(\Qellbar)
$$
generated by Frobenii corresponding to the good characters in $\dc\PhiUNu$, where $R:=\deg(\LC(T,\rho))=\deg(\LC(T,\rhochi))$ (see \cite[Prop.~4.3.1]{HKRG}). More precisely, the Frobenius attached to a character $\dc\alpha^\nu\in\dc\PhiUNu$ is an element of $\GL_R(\Qellbar)$ with characteristic polynomial $\LC(T,\rho\otimes\chi\alpha^\nu)$, and $\Ggeom{\rho}{\dc\PhiUNu}$ is the Zariski closure of all such elements when one takes $q\to\infty$.

We say that $\dc\in\PhiQ$ is \defi{big for $\rho$} iff it lies in the set
$$
	\PhiQBig\rho
	:=
	\{\,
		\dc\in\PhiQ
		:
		\Ggeom{\rho}{\dc\PhiUNu}
		=
		\GL_R(\Qellbar)
	\,\},
$$
and we say that \defi{the Mellin transform of $\rho$ has big monodromy} iff
$$
	|\PhiQ|
	\sim
	|\PhiQBig\rho|
	\text{ as }
	q\to\infty.
$$

\begin{lemma}\label{size}
If the Mellin transform of $\rho$ has big monodromy, then
$$
	|\PhiQ|
	\sim
	|\PhiQGood\rho|
	\sim
	|\PhiQBig\rho|
	\text{ as }
	q\to\infty
$$
\end{lemma}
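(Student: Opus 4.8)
The plan is to prove the chain of inclusions $\PhiQBig\rho \seq \PhiQGood\rho \seq \PhiQ$ and then squeeze. The inclusion $\PhiQGood\rho \seq \PhiQ$ is immediate from the definition of $\PhiQGood\rho$, and the definition of big monodromy recalled just above says precisely that the hypothesis gives $|\PhiQ| \sim |\PhiQBig\rho|$. So once we know $\PhiQBig\rho \seq \PhiQGood\rho$ we obtain $(1+o(1))|\PhiQ| = |\PhiQBig\rho| \le |\PhiQGood\rho| \le |\PhiQ|$ as $q\to\infty$, which forces $|\PhiQGood\rho| \sim |\PhiQ|$ as well, and all three quantities are mutually asymptotic.

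Thus the whole content is the inclusion $\PhiQBig\rho \seq \PhiQGood\rho$: a big character is good. I would argue this by unwinding the construction of \cite[\S10]{HKRG}. Fix $\dc\in\PhiQBig\rho$, so $\Ggeom{\rho}{\dc\PhiUNu}=\GL_R(\Qellbar)$. The polynomials $\LC(T,\rho\otimes\dc\alpha^\nu)$, as $\dc\alpha^\nu$ runs over the coset $\dc\PhiUNu$, are the reversed characteristic polynomials of the Frobenius conjugacy classes of a lisse $\Qellbar$-sheaf on the relevant parameter space, whose geometric monodromy group is $\Ggeom{\rho}{\dc\PhiUNu}$. Since $\rho$ is punctually pure of weight $w$ and a Dirichlet twist contributes no additional weight, that sheaf is punctually pure of weight $1+w$, so by Deligne's Weil~II every zero $\alpha$ of every member of the family --- in particular of $\LC(T,\rhochi)$ --- satisfies $|\iota(\alpha)|^2=q^{-(1+w)}$ for each embedding $\iota\colon\Qbar\to\bbC$. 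Moreover $\GL_R(\Qellbar)$ has rank $R=\deg\LC(T,\rho)$, which pins the generic degree of $\LC(T,\rho\otimes\dc\alpha^\nu)$ to $R$ and thereby excludes both heavy behaviour (a pole, or lower degree) and surviving Euler factors at places of $\CC$; in other words $L(T,\rhochi)=\LC(T,\rhochi)$ has degree exactly $R$. Combined with the purity of its zeros this says $\LCStar(T,\rhochi)$ is the characteristic polynomial of a conjugacy class $\trhochi\sub U_R(\bbC)$, i.e.\ $\dc$ is good.

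The hard part is precisely this inclusion, and specifically the need to extract from \cite{HKRG} that the entire family of $L$-functions over a big coset descends from a single pure lisse sheaf on a parameter variety of generic rank $R$: this is what lets purity be applied uniformly --- to $\dc$ itself, not merely to the characters known a priori to be good --- and what lets full monodromy rule out heavy and mixed characters. Everything past that is the formal squeeze above; alternatively, if \cite{HKRG} records the inclusion $\PhiQBig\rho\seq\PhiQGood\rho$ outright, the proof collapses to citing it together with the definition of big monodromy.
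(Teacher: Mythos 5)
Your squeeze is fine, but it rests entirely on the inclusion $\PhiQBig\rho\seq\PhiQGood\rho$, and that inclusion is not available --- indeed, with the definitions as set up here it is false in general. Bigness is a property of the coset: $\Ggeom{\rho}{\dc\PhiUNu}$ depends only on $\dc\PhiUNu$ and is generated by the Frobenii attached to the \emph{good} characters in that coset, so every character of a big coset lies in $\PhiQBig\rho$, whether or not it is itself good. In particular the trivial character $\one$, which the paper explicitly allows to be heavy (the standing hypothesis is $\PhiQHeavy\rho\seq\{\one\}$), sits in some coset, and under big monodromy that coset will typically have full monodromy; then $\one\in\PhiQBig\rho\ssm\PhiQGood\rho$. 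Your purity argument fails at exactly this point: the Mellin-transform object is lisse, of rank $R$, and pure only away from the exceptional characters, so Weil~II purity and the degree count apply at the good characters of the coset but cannot be applied ``uniformly, to $\dc$ itself''; full monodromy of the group generated by the good Frobenii does not eliminate the finitely many bad characters in the coset (it is precisely because mixed characters may persist that Lemma~\ref{just good} has to bound their contribution separately).

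What the lemma actually needs is a counting statement, not an inclusion: the non-good characters form a vanishing proportion of $\PhiQ$ as $q\to\infty$ (in \cite{HKRG} the number of bad characters is controlled independently of $q$, while $|\PhiQ|\to\infty$). The paper's proof is simply a citation of \cite[Cor.~10.4.3]{HKRG}, which records exactly this; combined with the definition of big monodromy ($|\PhiQ|\sim|\PhiQBig\rho|$) it gives all three asymptotics. So your reduction to ``big implies good'' is the wrong mechanism, and the fallback you mention (citing the reference) is in fact the paper's entire proof --- but the thing to cite is the count of bad characters, not the inclusion.
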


\begin{proof}
See \cite[Corollary .~10.4.3]{HKRG}.
\end{proof}
If the Mellin transform of $\rho$ has big monodromy it implies that $\trhochi$ become equidistributed in $U_R(\bbC)$ (for reference see Theorem 10.0.4 combined with remark 8.2.4 in \cite{HKRG}):
\begin{theorem}\label{equidistribution}
Suppose that $\rho$ is punctually pure of weight $w$, that its Mellin transform has big monodromy, and that $\PhiQHeavy\rho\seq\{\one\}$ for all $q$. Then for any continuous function $F:U_R(\bbC)\rightarrow \bbC$
\begin{equation}
	\lim_{q\to\infty}
	\frac{1}{|\PhiQGood\rho|}
	\sum_{\dc\in\PhiQGood\rho}
	F(\trhochi)
	\ =
	\int_{U_R(\bbC)}
	F(\theta)
	d\theta
\end{equation}
with respect to Haar measure $d\theta$ on $U_R(\bbC)$.
\end{theorem}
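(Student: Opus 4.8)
The plan is to run the standard Deligne equidistribution argument: reduce via the Weyl criterion to bounding character sums, interpret those sums cohomologically through the Mellin--transform sheaves of \cite{HKRG}, and apply the Grothendieck--Lefschetz trace formula together with Deligne's Weil~II. First I would invoke Peter--Weyl: the characters $\Tr\Lambda$ of the irreducible representations $\Lambda$ of $U_R(\bbC)$ span a dense subspace of the continuous conjugation--invariant functions, so it suffices to prove the asserted limit for each $F=\Tr\Lambda$. Because $\rho$ is punctually pure of weight $w$, every $\trhochi$ is genuinely a conjugacy class in $U_R(\bbC)$, so $\frac{1}{|\PhiQGood\rho|}\sum_{\dc\in\PhiQGood\rho}\delta_{\trhochi}$ is a probability measure on $U_R(\bbC)^\#$; for the trivial $\Lambda$ both sides equal $1$, and for nontrivial $\Lambda$ the right-hand side is $0$, so the whole theorem reduces to
$$
	\frac{1}{|\PhiQGood\rho|}\sum_{\dc\in\PhiQGood\rho}\Tr\Lambda(\trhochi)\ \longrightarrow\ 0
	\qquad(q\to\infty)
$$
for every nontrivial irreducible $\Lambda$.

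Next I would partition $\PhiQGood\rho$ along the cosets $\dc_0\PhiUNu$. The set $\PhiQBig\rho$ is a union of such cosets; by the big--monodromy hypothesis and Lemma~\ref{size} its complement has size $o(|\PhiQ|)$, and since $|\Tr\Lambda(\trhochi)|\le\dim\Lambda$ on the unitary group the good characters lying outside $\PhiQBig\rho$ contribute only $o(|\PhiQ|)$ and may be dropped. For a coset $\dc_0\PhiUNu\seq\PhiQBig\rho$ I would use the geometric apparatus of \cite[\S10]{HKRG}: a smooth affine geometrically connected parameter variety $U/\Fq$ whose $\Fq$-points index the characters of the coset, carrying a lisse $\Qellbar$-sheaf $\FF$ of rank $R$, pure of weight $1+w$, whose Frobenius conjugacy class at the point attached to $\dc=\dc_0\alpha^\nu$ has characteristic polynomial $\LC(T,\rhochi)$; after the Tate twist renormalizing $\FF$ to weight $0$, this class is precisely $\trhochi$ for every good $\dc$ in the coset, and the geometric monodromy group of the twisted sheaf is $\Ggeom{\rho}{\dc_0\PhiUNu}=\GL_R(\Qellbar)$.

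Extending $\Lambda$ to a rational representation of $\GL_R$ and forming the weight--$0$ sheaf $\Lambda(\FF)$, the Grothendieck--Lefschetz trace formula gives
$$
	\sum_{x\in U(\Fq)}\Tr\big(\Frob_x\mid\Lambda(\FF)\big)
	=
	\sum_i(-1)^i\Tr\big(\Frob\mid H^i_c(U_{\k},\Lambda(\FF))\big).
$$
Since $\Lambda$ is a nontrivial irreducible of $\GL_R$ and the geometric monodromy is all of $\GL_R$, the sheaf $\Lambda(\FF)$ has no nonzero geometric coinvariants, so $H^{2\dim U}_c$ vanishes; each remaining $H^i_c$ has weight $\le i\le 2\dim U-1$ by Weil~II \cite{Deligne:WeilII}, so the sum is $O(q^{\dim U-1/2})$. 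Replacing the sum over $U(\Fq)$ by the sum over good characters in the coset costs at most $\dim\Lambda$ times the number of non-good points; summing over the $\asymp|\PhiQ|/q^{\dim U}$ big cosets (using $|U(\Fq)|=q^{\dim U}+O(q^{\dim U-1/2})$ and $|\PhiUNu|\asymp q^{\dim U}$), the main terms total $O(|\PhiQ|\,q^{-1/2})$, while the non-good points over all cosets number at most $|\PhiQMixed\rho|+|\PhiQHeavy\rho|=o(|\PhiQ|)$ by Lemma~\ref{size} and the hypothesis $\PhiQHeavy\rho\seq\{\one\}$. Hence $\sum_{\dc\in\PhiQGood\rho}\Tr\Lambda(\trhochi)=o(|\PhiQ|)=o(|\PhiQGood\rho|)$, which is exactly what the Weyl criterion requires, and the theorem follows.

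The part I expect to be the real obstacle is the uniformity of the implied constant in the Weil~II bound. That constant is a bound on $\sum_i\dim H^i_c(U_{\k},\Lambda(\FF))$, which by the Euler--Poincar\'e (Grothendieck--Ogg--Shafarevich) formula is controlled by the rank and the tame and Swan conductors of $\Lambda(\FF)$, and one needs these bounded independently of $q$ and of the coset $\dc_0\PhiUNu$ as $q\to\infty$. This is precisely the uniform control of the ramification of the Mellin--transform sheaves established in \cite{HKRG}; granting it, everything else above is a routine assembly of Peter--Weyl, the Lefschetz trace formula and Weil~II. A secondary point requiring care is that $U$ is genuinely geometrically connected of one fixed dimension for every big coset, so that the top--cohomology vanishing and the count $|U(\Fq)|\sim q^{\dim U}$ are uniform; this, too, is part of the construction in \cite{HKRG}.
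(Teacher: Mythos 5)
The paper supplies no independent proof here: it simply cites Theorem~10.0.4 together with Remark~8.2.4 of \cite{HKRG}, where this equidistribution statement is established via the Deligne--Katz machinery applied to the Mellin transform. Your sketch is a correct reconstruction of that argument --- Peter--Weyl reduction to Weyl sums for nontrivial irreducibles, decomposition of $\PhiQGood\rho$ into cosets of $\PhiUNu$, the lisse weight-zero sheaf on the parameter variety with full $\GL_R$ geometric monodromy, the Lefschetz trace formula plus Weil~II, and (as you rightly flag) a uniform Euler--Poincar\'e bound on the Betti numbers --- so it takes essentially the same route, just spelled out.
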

In the following Theorem we present a sufficient criteria for the Mellin transform of $\rho$ to have big monodromy (explicit example for representations meeting this criteria can be found in \cite[\S 12]{HKRG}).
\begin{theorem}
Let $s$ be the conductor of $\rho$ and suppose that $\gcd(\s,c)=t$ and that $\deg c\geq 3$.  Suppose moreover that $V(0)$ has a unique unipotent block of exact multiplicity one and that $\rho$ is geometrically simple and pointwise pure.  If $r:=\dim(V)$ and $\deg c$ satisfy
\begin{equation}
\deg c >\frac{1}{r}(72(r^2+1)^2 - r - \deg(L(T,\rho)) +\sum_{\nu \in C}\deg \nu(r-\deg L(T,\rho_{\nu
}))
\end{equation}
then the Mellin transform of $\rho$ has big monodromy.
\end{theorem}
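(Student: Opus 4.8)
The plan is to follow the $\ell$-adic Mellin transform method of Katz, in the form developed in \cite{HKRG}. One realizes the Mellin transform of $\rho$ as a sheaf $\GG$, lisse of rank $R=\deg\LC(T,\rho)$ on a dense open subscheme of the scheme whose points parametrize the Dirichlet characters modulo $c$, with the property that the Frobenius conjugacy class at the point corresponding to $\dc$ has characteristic polynomial $\LCStar(T,\rhochi)$. Concretely $\GG$ arises by multiplicatively convolving the middle extension of $\rho$ --- a pointwise pure sheaf on $\Aonet$ ramified exactly along the conductor $s$ and at $\infty$ --- against the tamely ramified rank-one sheaves attached to characters of $\BQ$, and the Euler--Poincar\'e formula computes its rank to be $R$, the hypothesis $\gcd(s,c)=t$ being what keeps the local contributions under control. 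By definition, proving that the Mellin transform of $\rho$ has big monodromy is the same as showing that the geometric monodromy group $\Ggeom{\rho}{\dc\PhiUNu}\seq\GL_R(\Qellbar)$ is all of $\GL_R(\Qellbar)$ for all but $o(|\PhiQ|)$ of the characters $\dc$ (equivalently, all but a vanishing proportion of the cosets $\dc\PhiUNu$) as $q\to\infty$; note that $R$ grows linearly in $\deg c$.

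The heart of the matter is to identify the group $G:=\Ggeom{\rho}{\dc\PhiUNu}$ for the generic coset. Since $\rho$ is geometrically simple and pointwise pure and $\gcd(s,c)=t$, the sheaf $\GG$ is pointwise pure of the expected weight and geometrically irreducible, so $G$ acts irreducibly on $\Qellbar^{R}$ and $G^{\circ}$ is reductive. The hypothesis that $V(0)$ has a unique unipotent block of exact multiplicity one forces the local monodromy of $\GG$ at the distinguished singular point coming from the place $t=0$ to be a unipotent pseudoreflection, i.e.\ $G$ contains an element which is the identity plus a rank-one nilpotent. One then applies the classification of irreducible subgroups of $\GL_R$ containing a pseudoreflection (Kazhdan--Margulis, Gabber, Katz): once $R$ is large enough, $G^{\circ}$ must be $\SL_R$, $\GL_R$, $\Sp_R$, $\operatorname{SO}_R$, or the image of one of finitely many exceptional low-rank representations; the exceptional family is ruled out because $R\to\infty$. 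To eliminate the self-dual possibilities $\Sp_R$ and $\operatorname{SO}_R$, and at the same time to promote $\SL_R$ to $\GL_R$, one shows $\GG$ is not geometrically self-dual by tracking the determinant sheaf $\det\GG$: it depends on $\dc$ through $\det(\rhochi)$ and has order tending to infinity with the order of $\dc$, hence exceeds $2$ for a density-one set of $\dc$, which is incompatible with symplectic or orthogonal autoduality. This gives $G=\GL_R(\Qellbar)$, i.e.\ $\dc\in\PhiQBig\rho$, for the generic coset.

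To upgrade ``generic'' to the explicit inequality one makes the counting quantitative. The characters $\dc$ for which the argument fails --- because $\GG$ drops rank on the corresponding coset, or because an exceptional or self-dual group occurs --- are confined to a proper closed subset of the character scheme whose degree is bounded purely in terms of the complexity of $\rho$: its rank $r$, its tame and Swan conductors at $s$ and at $\infty$, and the local degrees $\deg L(T,\rho_\nu)$ for $\nu\in\CC$. Katz-type bounds for Euler characteristics and for conductors of convolutions produce an explicit estimate of the shape $72(r^2+1)^2$ for the ``purely geometric'' part of this complexity, while the remaining terms $-\,r-\deg L(T,\rho)+\sum_{\nu\in\CC}\deg\nu\,(r-\deg L(T,\rho_\nu))$ account for the discrepancy between $R$ and $r\deg c$ caused by the conductor of $\rho$ and by the places in $\CC$ (compare the identity $R=2\deg c-1$ for the Legendre curve recorded in Remark~\ref{rmk:proof-of-legendre-theorem}). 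Thus the hypothesis $r\deg c>72(r^2+1)^2-r-\deg L(T,\rho)+\sum_{\nu\in\CC}\deg\nu\,(r-\deg L(T,\rho_\nu))$ says exactly that $R$ exceeds this complexity bound, which is what is needed both for the pseudoreflection classification to apply and for the exceptional locus to have negligible size, so that $|\PhiQBig\rho|\sim|\PhiQ|$.

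I expect the main obstacle to be the exclusion of geometric self-duality --- ruling out $\Sp_R$ and $\operatorname{SO}_R$ --- uniformly over a density-one family of characters, since this is precisely where the local hypothesis on $V(0)$ and the coprimality $\gcd(s,c)=t$ must be exploited, and since making it effective (so as to recover the explicit constant $72(r^2+1)^2$ together with the correction terms) calls for careful bookkeeping of tame and wild conductors through the convolution.
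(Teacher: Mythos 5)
The paper itself does not prove this theorem: it is stated without a proof environment and leans entirely on \cite{HKRG} (the preceding lemmas and the equidistribution theorem in this section are likewise attributed to specific results in that reference, and the remark after the theorem points to \cite[\S 12]{HKRG} for examples). So there is no in-paper argument to compare against; what can be said is whether your sketch is consistent with the Katz-style machinery that \cite{HKRG} develops and this paper invokes.

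On that score your reconstruction is on the right track. You correctly identify the three structural pillars: (i) realize the Mellin transform as a lisse sheaf $\GG$ on (a dense open of) the character scheme, of rank $R=\deg\LC(T,\rho)$, via multiplicative convolution, with Euler--Poincar\'e controlling $R$; (ii) use the hypothesis on $V(0)$ --- a unique unipotent block of exact multiplicity one --- to produce a pseudoreflection in the geometric monodromy, and then invoke the Kazhdan--Margulis/Gabber/Katz classification of irreducible subgroups of $\GL_R$ containing a pseudoreflection, ruling out the bounded-rank exceptional cases once $R$ is large; (iii) exclude the self-dual possibilities $\Sp_R$, $\operatorname{SO}_R$ and promote $\SL_R$ to $\GL_R$ via the determinant sheaf as $\dc$ varies. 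This matches the strategy the paper is implicitly relying on, and you are right that $\gcd(s,c)=t$ is what keeps $\rho$'s ramification and the twisting characters' ramification from interfering except at $t=0$, where the unipotent block hypothesis takes over.

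Where the proposal remains a sketch rather than a proof is exactly where the theorem's content is quantitative. You wave at ``Katz-type bounds \dots of the shape $72(r^2+1)^2$'' and at the correction terms accounting for $R$ versus $r\deg c$, but you do not derive the Euler--Poincar\'e identity for $R$ in terms of $r$, $\deg c$, $\deg L(T,\rho)$ and the local degrees $\deg L(T,\rho_\nu)$, nor do you show where the specific constant $72(r^2+1)^2$ enters the classification (it is a concrete group-theoretic threshold, not a placeholder). Similarly, the determinant argument is asserted to work for ``a density-one set of $\dc$'' without a mechanism: one needs the order of $\det(\rho\otimes\dc)$ restricted to a coset to be controlled uniformly, and you flag but do not resolve how $\gcd(s,c)=t$ and the $V(0)$ hypothesis feed into that. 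You are candid about these being the obstacles, which is fair; but as written the proposal establishes the shape of the argument, not the stated inequality. Given that the paper outsources the proof to \cite{HKRG}, the appropriate check would be against that reference rather than against anything here.
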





\section{Proof of Theorem \ref{thm:main-theorem}}

We express the variance of the arithmetic progressions sums $S_{k,n,c}(A)$ in terms of sums of divisor functions twisted by Dirichlet characters.  Let $k$ be a positive integer and let
\begin{equation}\label{twisted-d-sum}
	\ckn{\rhochi}
	=
	\sum_{\substack{f\in\MM_n\\[0.02in]\gcd(f,\Q)=1}} d_{k,\rhochi}(f)
\end{equation}
be the coefficients in the expansion of the k-th power of the partial L-function \eqref{eqn:L_CC-to-d_k}
$$
	\LC(T,\rhochi)^k
	=
	\sum_{n=0}^\infty
	\ckn{\rhochi}
	\,T^n
$$
For each $\dc\in\PhiQ$, we extend $\dc$ to a multiplicative map $\chiz\colon\MM\to\Qellbar$ by defining:
$$
	\chiz(f)
	=
	\begin{cases}
		\dc(f+\Q\,\Fq[t]) & \mbox{if }
		\gcd(f,\Q)=1 \\
		0 & \mbox{otherwise}.
	\end{cases}
$$
It is multiplicative and satisfies
\begin{equation*}\label{eq:chis.vs.chiz}
	\chiz(\pi)
	=
	\begin{cases}
		\dc(\Frob_{v(\pi)}) &  \mbox{if }\pi\nmid\Q \\
		0 & \mbox{otherwise}
	\end{cases}
	\mbox{ for }\pi\in\II.
\end{equation*}

The Orthogonality relations for Dirichlet characters are:
\begin{enum}
\item for each $A_1,A_2\in\BQ$
\begin{equation}\label{eq:orth:A}
	\frac{1}{|\PhiQ|}\sum_{\dc\in\PhiQ}\chiz(A_1)\chibarz(A_2)
	=
	\begin{cases}
	1 & \mbox{if }A_1=A_2 \mbox{ mod } c \\
	0 & \mbox{if }otherwise,
	\end{cases}
\end{equation}
\item for every $\chione,\chitwo\in\PhiQ$
\begin{equation}\label{eq:orth:chi}
	\frac{1}{|\PhiQ|}\sum_{A\in\BQ}\chionez(A)\chitwobarz(A)
	=
	\begin{cases}
	1 & \mbox{if }\chione=\chitwo \\
	0 & \mbox{if }\chione\neq\chitwo.
	\end{cases}
\end{equation}
\end{enum}

We use \eqref{eq:orth:A} and \eqref{twisted-d-sum} to express the sum of $d_{k,\rho}(f)$ over arithmetic progressions in terms of $\ckn{\rhochi}:$
\begin{equation*}\label{eqn:sum-of-bs}
	\SnAQ
	=
	\frac{1}{|\PhiQ|}
	\sum_{f\in\MM_n}
		d_{k,\rho}(f)
	\sum_{\dc\in\PhiQ}
		\chiz(A)
		\bar{\chiz}(f)
	=
	\frac{1}{|\PhiQ|}
	\sum_{\dc\in\PhiQ}
		\ckn{\rhochi}\cdot\bar{\chiz}(A) 
\end{equation*}
Therefore, if we write $\chinot\in\PhiQ$ for the trivial character and by using the second orthogonality relation \eqref{eq:orth:chi} then expected value of $\SnAQ$ equals
\begin{equation}
	\begin{split}
    	\bbE_A[\SnAQ]
    	& := \frac{1}{|\PhiQ|}\sum_{A\in\BQ}\SnAQ \\
    	& = \frac{1}{|\PhiQ|^2}
    		\sum_{\dc\in\PhiQ}
        	\ckn{\rhochi}
        	\sum_{A\in\BQ}
    	    	\bar\chiz(A) \\
        & = \frac{1}{|\PhiQ|} \ckn{\rho\otimes\chinot}
	\end{split}
\end{equation}
	
In particular, we have the identity
\begin{equation}\label{eq:s-e}
	\SnAQ - \bbE_A[\SnAQ]
	=
	\frac{1}{|\PhiQ|}
	\sum_{\substack{\dc\in\PhiQ \\ \dc\neq\chinot}}
		\ckn{\rhochi}\cdot\bar\dc(A).
\end{equation}

Now consider the variance
$$
	\Var_A[\SnAQ]
	=
	\frac{1}{|\PhiQ|}
	\sum_{A\in\BQ}
		\left|\SnAQ - \bbE_A[\SnAQ]\right|^2.
$$
If we apply identities \eqref{eq:orth:chi} and \eqref{eq:s-e}, then the right side equals
$$
	\frac{1}{|\PhiQ|^3}
	\sum_{A\in\BQ}
	\sum_{\substack{\chione,\chitwo\in\PhiQ\\\chionez,\chitwoz\neq\chinot}}
		\ckn{\rho\otimes\chione}\overline{\ckn{\rho\otimes\chitwo}}
		\cdot
		\chionebarz(A)\chitwoz(A)
	=
	\frac{1}{|\PhiQ|^2}
	\sum_{\substack{\dc\in\PhiQ\\\dc\neq\chinot}}
		|\ckn{\rhochi}|^2.
$$

\medskip
In summary, the function $\SnAQ$ of the random variable $A$ satisfies
\begin{equation}\label{eqn:E-and-V-formulae}
	\bbE_A[\SnAQ]
	=
	\frac{1}{|\PhiQ|}\ckn{\rho\otimes \chinot},
	\quad
	\Var_A[\SnAQ]
	= 
	\frac{1}{|\PhiQ|^2}
	\sum_{\substack{\dc\in\PhiQ\\\dc\neq\chinot}}
		|\ckn{\rhochi}|^2.
\end{equation}

Next, we break $\ckn\rhochi$ into smaller pieces which we will express in terms of the associated Frobenius matrices:

\begin{lemma}\label{coefficient}
$$
	\ckn\rhochi
	=
	\sum_{n_1+\cdots+n_k=n}
	\,\,
	\prod_{i=1}^k
	\,
	c_{1,\rhochi,n_i}
$$
\end{lemma}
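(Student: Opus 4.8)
The plan is to read the identity off directly from the generating-function definitions of the coefficients involved, with no arithmetic input beyond what has already been set up. Specializing \eqref{twisted-d-sum} and \eqref{eqn:L_CC-to-d_k} to $k=1$ and replacing $\rho$ by the twist $\rhochi$, one has in $\Qellbar[[T]]$ the equality
$$
	\LC(T,\rhochi) = \sum_{m=0}^\infty c_{1,\rhochi,m}\,T^m,
$$
while for general $k$ the coefficients $\ckn{\rhochi}$ are defined by $\LC(T,\rhochi)^k = \sum_{n=0}^\infty \ckn{\rhochi}\,T^n$. So the claim is simply that the coefficient sequence of the $k$th power is the $k$-fold Cauchy convolution of the coefficient sequence of the first power.

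First I would raise the degree-one expansion to the $k$th power and expand the product of the $k$ identical formal power series: the coefficient of $T^n$ in $\big(\sum_{m\ge 0} c_{1,\rhochi,m}T^m\big)^k$ is precisely $\sum_{n_1+\cdots+n_k=n}\prod_{i=1}^k c_{1,\rhochi,n_i}$, the sum running over $k$-tuples of non-negative integers with sum $n$. Comparing this with the defining expansion $\LC(T,\rhochi)^k=\sum_{n\ge 0}\ckn{\rhochi}T^n$ and equating the coefficient of $T^n$ gives the lemma.

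There is essentially no obstacle here. The only points worth a word are that the power-series manipulation is legitimate, which it is since everything lives in $\Qellbar[[T]]$ and only finitely many tuples contribute to a given power of $T$; and that the indexing convention is consistent, in particular that $c_{1,\rhochi,0}=1$ (equivalently $a_1=1$), so that tuples with some $n_i=0$ contribute correctly. If one prefers an argument on the arithmetic side, the same identity follows from the multiplicativity of $d_{k,\rho\otimes\dc}$ and the combinatorial identity defining $\dk$ applied to $\rhochi$, together with the fact that $\chiz$ is multiplicative and supported on polynomials coprime to $\Q$; but the generating-function proof is the cleanest, and it is the one I would record.
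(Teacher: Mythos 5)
Your proposal is correct and matches the paper's own argument: the paper also proves the lemma by writing $\LC(T,\rhochi)^k=\bigl(\sum_{n\ge 0}c_{1,\rhochi,n}T^n\bigr)^k$ and expanding the $k$-fold product to read off the coefficient of $T^n$ as the Cauchy convolution. Your extra remarks on convergence of the formal manipulation and on $c_{1,\rhochi,0}=1$ are fine but not needed.
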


\begin{proof}
This follows immediately from the definition of $\ckn\rhochi$ as the coefficients in the expansion of $\LC(T,\rhochi)^k$:
\begin{equation}
	\LC(T,\rhochi)^k
	=
	(\sum_{n=0}^{\infty}c_{1,\rhochi,n}T^{n})^{k}
	=
	\sum_{n=0}^{\infty}\sum_{n_{1}+\ldots+n_{k}=n}
	\prod_{i=1}^k
		c_{1,\rhochi,n_{i}}T^{n}
\end{equation}
\end{proof}


Recall that $\std\colon U_R(\bbC)\to\GL_R(\bbC)$ is the representation given by the inclusion $U_R(\bbC)\sub\GL_R(\bbC)$ and
$$
	\wedge^j \std\colon U_R(\bbC)\to \GL_{R_j}(\wedge^j\bbC)
$$
is its $j$th exterior power where $R_j=\binom{R}{j}$; and we defined $(\wedge^j\,\std)(g)=0$ unless $0\leq j\leq R$.


\begin{lemma}\label{eval coeff good}
Let $\dc\in\PhiQGood\rho$, then
$$
	c_{1,\rhochi,n}
	=
	(-1)^n
	\cdot
	(\sqrt{q})^{n(1+w)}
	\cdot
	\Tr((\wedge^n\std)(\trhochi))
$$
\end{lemma}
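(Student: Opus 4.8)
The plan is to identify $c_{1,\rho\otimes\dc,n}$ as (up to a sign and power of $\sqrt q$) the $n$th elementary symmetric function of the eigenvalues of the unitarized Frobenius matrix, and then recognize that elementary symmetric function as the trace of an exterior power. The starting point is the definition \eqref{eqn:L_CC-to-d_k}: the $c_{1,\rho\otimes\dc,n}$ are by construction the coefficients of $\LC(T,\rhochi)$ itself (the $k=1$ case), so
$$
	\LC(T,\rhochi) = \sum_{n=0}^\infty c_{1,\rho\otimes\dc,n}\,T^n .
$$

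\textbf{Step 1 (use goodness).} Because $\dc$ is good for $\rho$, the discussion following \eqref{good} tells us that the unitarized $L$-function $\LCStar(T,\rhochi) = \LC(T/(\sqrt q)^{1+w},\rhochi)$ is the characteristic polynomial of the unitary conjugacy class $\trhochi\subset U_R(\bbC)$. I would fix the normalization convention for ``characteristic polynomial'' used in the paper --- here it should be $\det(1 - T\,\theta)$ evaluated on $\theta\in\trhochi$, matching the Euler-factor convention $L(T,\rho_v) = \det(1 - T\rho(\Frob_v)\mid V_v)$ --- so that
$$
	\LCStar(T,\rhochi) = \det\bigl(1 - T\,\trhochi\bigr) = \sum_{n=0}^R (-1)^n\,e_n(\trhochi)\,T^n ,
$$
where $e_n(\trhochi)$ is the $n$th elementary symmetric polynomial in the eigenvalues $\lambda_1,\dots,\lambda_R$ of (any representative of) $\trhochi$.

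\textbf{Step 2 (undo the normalization).} Substituting $T\mapsto (\sqrt q)^{1+w}T$ in the displayed identity for $\LCStar$ recovers $\LC(T,\rhochi)$, and comparing coefficients of $T^n$ with the Dirichlet expansion above gives
$$
	c_{1,\rho\otimes\dc,n} = (-1)^n\,(\sqrt q)^{\,n(1+w)}\,e_n(\trhochi).
$$

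\textbf{Step 3 (exterior power).} Finally, it is a standard linear-algebra fact that for a matrix $g$ with eigenvalues $\lambda_1,\dots,\lambda_R$ one has $\Tr(\wedge^n g) = e_n(\lambda_1,\dots,\lambda_R)$ for $0\le n\le R$, and both sides vanish for $n>R$, which is exactly the convention $(\wedge^n\std)(g)=0$ for $n\notin[0,R]$ fixed in the statement. Applying this with $g$ a representative of $\trhochi$ turns $e_n(\trhochi)$ into $\Tr((\wedge^n\std)(\trhochi))$, yielding the claim. \emph{The only real point requiring care} --- and the step I would flag --- is the bookkeeping of signs and of the $(\sqrt q)^{1+w}$ scaling: one must make sure the characteristic-polynomial convention, the direction of the substitution $T\mapsto T/(\sqrt q)^{1+w}$ in the definition of $\LCStar$, and the sign in $\det(1-T\theta)=\sum(-1)^n e_n T^n$ are all consistent with each other; once these are pinned down the identity is immediate. (For $n>R$ the left side $c_{1,\rho\otimes\dc,n}$ vanishes since $\LC(T,\rhochi)$ has degree $R$, matching the convention on the right.)
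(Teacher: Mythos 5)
Your proof is correct and follows the same strategy as the paper's: use the goodness hypothesis to identify $\LCStar$ with the characteristic polynomial of $\trhochi$, read off its coefficients as signed elementary symmetric functions of the eigenvalues, identify these with traces of exterior powers, and then scale back by $T\mapsto(\sqrt q)^{1+w}T$ to recover the coefficients of $\LC$. Your explicit Step~2 is in fact a welcome clarification, since the paper's final displayed equation attributes the coefficients $(-1)^i(\sqrt q)^{i(1+w)}\Tr((\wedge^i\std)(\trhochi))$ to $\LCStar$ rather than to $\LC$, which appears to be a typo that your careful normalization bookkeeping sorts out.
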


\begin{proof}
A Dirichlet character $\dc$ is good for $\rho$ iff the `unitarized' $L$-function
$$
	\LCStar(T,\rhochi) := \LC(T/(\sqrt{q})^{1+w},\rhochi)
$$

\ssni
is the characteristic polynomial of a unitary conjugacy class $\trhochi\sub U_R(\bbC)$ for $R=\deg(\LC(T,\rho))$.

The coefficients of the characteristic polynomial of an $N\times N$ matrix with eigenvalues $\lambda_{1},\ldots, \lambda_{n}$ are the elementary symmetric functions $\sum_{1\leq i_{1}<\ldots<i_{r}}\lambda_{i_{1}}\cdots \lambda_{i_{r}}$ which give the character of the exterior power representation. Thus we may write
$$
	\LCStar(T,\rhochi)
	= 
	\sum_{i=0}^{R}
		(-1)^i
		\cdot
		(\sqrt{q})^{i(1+w)}
		\cdot
		\Tr((\wedge^i\std)(\trhochi)) T^{i}  
$$       
\end{proof}

\begin{lemma}\label{eval coeff}
If $\dc\in\PhiQ$, then
$$
	|c_{1,\rhochi,n}|^2
	=
	\begin{cases}
    	O\left(q^{n(2+w)}\right) & \text{ if }\dc\in\PhiQHeavy\rho \\[0.05in]
    	O\left(q^{n(1+w)}\right) & \text{otherwise}
	\end{cases}
$$
The implied constants depend only on $\rho$ and $n$.
\end{lemma}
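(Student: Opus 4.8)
First I would separate the claim into a crude bound valid for \emph{every} $\dc\in\PhiQ$, which already gives the exponent $2+w$, and a refinement lowering the exponent to $1+w$ precisely when $\LC(T,\rhochi)$ is an honest polynomial, i.e.\ when $\dc$ is good or mixed. Throughout write $r=\dim(V)$.

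For the crude bound, recall from \eqref{twisted-d-sum} that $c_{1,\rhochi,n}=\sum_{f\in\MM_n,\ \gcd(f,\Q)=1}d_{1,\rhochi}(f)$, where $d_{1,\rhochi}$ is multiplicative with $d_{1,\rhochi}(\pi^m)=\dc(\Frob_v)^m\,\Tr(\rho_v(\Frob_v)^m\mid V_v)$ for $\pi\nmid\Q$ (the conductor of $\dc$ divides $\Q$, so $\dc$ is unramified at such $v$). Since $\dc$ is valued in roots of unity, $|d_{1,\rhochi}(\pi^m)|=|\Tr(\rho_v(\Frob_v)^m\mid V_v)|$, and punctual purity of $\rho$ of weight $w$ forces, by Deligne's weight bounds (cf.\ \cite{Deligne:WeilII} and \cite[\S6]{HKRG}), the eigenvalues of $\rho_v(\Frob_v)$ on the inertia invariants $V_v$ to have absolute value at most $q^{\deg(v)w/2}$; hence $|d_{1,\rhochi}(\pi^m)|\le r\,q^{m\deg(\pi)w/2}$ and, by multiplicativity, $|d_{1,\rhochi}(f)|\le r^{\omega(f)}q^{\deg(f)w/2}$ for all $f$ coprime to $\Q$, where $\omega(f)$ denotes the number of distinct prime factors of $f$. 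Using $r^{\omega(f)}\le\divid_r(f)$ pointwise and $\sum_{f\in\MM_n}\divid_r(f)=\binom{n+r-1}{r-1}q^n$ (the coefficient of $T^n$ in $(1-qT)^{-r}$), I obtain
$$
	|c_{1,\rhochi,n}|
	\ \le\
	q^{nw/2}\sum_{f\in\MM_n}r^{\omega(f)}
	\ \le\
	\binom{n+r-1}{r-1}\,q^{n(1+w/2)},
$$
so $|c_{1,\rhochi,n}|^2=O(q^{n(2+w)})$ with implied constant depending only on $r$ and $n$. This is the bound asserted for $\dc\in\PhiQHeavy\rho$, and it is essentially sharp there, since for a heavy character $\LC(T,\rhochi)$ has a genuine pole coming from $H^2_c$, which is pure of weight $w+2$.

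For $\dc$ good or mixed we have $\LC(T,\rhochi)\in\Qellbar[T]$, of degree $R$; writing $\LC(T,\rhochi)=\prod_{i=1}^R(1-\beta_i T)$ we get $c_{1,\rhochi,n}=(-1)^n e_n(\beta_1,\dots,\beta_R)$, the $n$th elementary symmetric function of the reciprocal roots. If $\dc$ is good, Lemma~\ref{eval coeff good} identifies $c_{1,\rhochi,n}$ with $(-1)^n(\sqrt{q})^{n(1+w)}\Tr\bigl((\wedge^n\std)(\trhochi)\bigr)$, and since $\trhochi\subseteq U_R(\bbC)$ that exterior-power trace has modulus at most $\binom{R}{n}$, whence $|c_{1,\rhochi,n}|^2\le\binom{R}{n}^2 q^{n(1+w)}$. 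If $\dc$ is mixed, the $\beta_i$ are Frobenius eigenvalues on $H^1_c$ of the lisse sheaf attached to $\rhochi$ on an affine curve, which is punctually pure of weight $w$ (the Dirichlet twist contributes weight $0$), so by Weil II that cohomology is mixed of weight $\le 1+w$ and $|\beta_i|\le q^{(1+w)/2}$; hence $|c_{1,\rhochi,n}|=|e_n(\beta_1,\dots,\beta_R)|\le\binom{R}{n}q^{n(1+w)/2}$ and again $|c_{1,\rhochi,n}|^2=O(q^{n(1+w)})$. The implied constants here depend only on $R$, hence only on $\rho$, and on $n$.

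The point that needs care is uniformity in $q$: one must note that $r=\dim V$, the degree $R$, and the degrees of the numerator and denominator polynomials appearing in the heavy case are all unchanged when $\Fq$ is replaced by a finite extension, so none of the implied constants depends on $q$. Beyond that, the argument is just two applications of Deligne's weight theorem --- to Frobenius on local inertia invariants at the ramified primes in the crude estimate, and to $H^1_c$ of a pure lisse sheaf on an affine curve in the sharp one --- together with the elementary fact that $\prod_{i=1}^R(1-\beta_i T)$ with all $|\beta_i|\le B$ has $n$th coefficient of modulus at most $\binom{R}{n}B^n$.
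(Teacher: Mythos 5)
Your proof is correct, and it reaches the two exponents by a route that differs from the paper's in the heavy case. The paper proves the lemma by writing $\LC(T,\rhochi)$ once and for all as a ratio $\prod_{i=1}^{s+R}(1-\alpha_iT)\big/\prod_{j=1}^{s}(1-\beta_jT)$ with $s\le\dim V$, $|\alpha_i|^2\le q^{1+w}$, $|\beta_j|^2\le q^{2+w}$ (citing \cite{HKRG}, (3.4.2) and Thm.~6.2.1), and then extracts the $T^n$-coefficient directly: for good or mixed $\dc$ one has $s=0$ so only the $\alpha_i$'s appear, giving $q^{n(1+w)}$; for heavy $\dc$ the Cauchy product with the $\beta_j$'s gives $q^{n(2+w)}$. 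You instead obtain the crude $O(q^{n(2+w)})$ bound \emph{for every} $\dc$ by going back to the Dirichlet coefficients $c_{1,\rhochi,n}=\sum_{f\in\MM_n,\ \gcd(f,\Q)=1}d_{1,\rhochi}(f)$ and applying Deligne's weight bound at each closed point ($|$eigenvalues of $\Frob_v$ on $V^{I(v)}|\le q^{\deg(v)w/2}$), combined with multiplicativity and the trivial count $\sum_{f\in\MM_n}\divid_r(f)=\binom{n+r-1}{r-1}q^n$; the bound is then refined to $O(q^{n(1+w)})$ for good/mixed $\dc$ using the fact that $\LC(T,\rhochi)$ is a polynomial whose inverse roots are controlled by Weil II on $H^1_c$. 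The two proofs buy the same thing; yours is slightly more self-contained for the heavy case (it does not need the explicit rational-function shape with the $|\beta_j|^2\le q^{2+w}$ bound, only the local weight estimates), while the paper's is shorter once one imports the factorization from \cite{HKRG}. One small point worth flagging: your good-character bound relies on Lemma~\ref{eval coeff good} and unitarity, while your mixed-character bound is a separate Weil~II argument; the paper's treatment unifies good and mixed into the single $s=0$ case, which is a bit cleaner, but your split is equally valid.
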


\begin{proof}
Fix a field embedding $\iota\colon\Qbar\to\bbC$ and identify $\Qbar$ with its image.  Recall that, for some integer $s$ depending on $\dc$ and satisfying $s\leq\dim(V)$, we can express $\LC(T,\rhochi)$ as a ratio 
$$
	\prod_{i=1}^{s+R}(1-\alpha_iT)
	\,/
	\prod_{j=1}^s(1-\beta_jT)
$$
where the $\alpha_i$ and $\beta_j$ lie in $\Qbar$ and satisfy
$$
	|\alpha_i|^2\leq q^{1+w},
	\quad
	|\beta_j|^2\leq q^{2+w}.
$$
For reference see equation $(3.4.2)$ and Theorem $6.2.1$ in \cite{HKRG}.
If $\dc\not\in\PhiQHeavy\rho$, then $s=0$ and $|\alpha_i|^2\leq q^{1+w}$, and thus
$$
	|c_{1,\rhochi,n}|^2
	=
	\left|
    	\sum_{i_1<\cdots<i_n}
    	\alpha_{i_1}\cdots\alpha_{i_n}
	\right|^2
	\leq
 	\sum_{i_1<\cdots<i_n}
	\left|
    	\alpha_{i_1}\cdots\alpha_{i_n}
	\right|^2
	\leq
	\binom{R}{k}q^{1+w}.
$$
When $\rho$ is heavy we get
\begin{eqnarray*}
	|c_{1,\rhochi,n}|^2
	& = &
	\left|
		\sum_{n_1+n_2=n}
		(-1)^{n_1}
    	\sum_{i_1<\cdots<i_{n_1}}
    	\alpha_{i_1}\cdots\alpha_{i_{n_1}}
     	\sum_{j_1\leq\cdots\leq\,j_{n_2}}
       	\beta_{j_1}\cdots\beta_{j_{n_2}}
	\right|^2 \\[0.1in]
	& \leq &
	\sum_{n_1+n_2=n}\,
   	\sum_{i_1<\cdots<i_{n_1}}\,
   	\sum_{j_1\leq\cdots\leq\,j_{n_2}}
	\left|
    	\alpha_{i_1}\cdots\alpha_{i_{n_1}}
       	\beta_{j_1}\cdots\beta_{j_{n_2}}
	\right|^2 \\[0.05in]
	& = &
	O(q^{n(2+w)}).
\end{eqnarray*}
\end{proof}

\begin{cor}\label{eval coeff k}
If $\dc\in\PhiQ$, then
$$
	|\ckn\rhochi|^2
	=
	\begin{cases}
    	O\left(q^{n(2+w)}\right) & \text{ if }\dc\in\PhiQHeavy\rho \\[0.05in]
    	O\left(q^{n(1+w)}\right) & \text{otherwise}
	\end{cases}
$$
The implied constants depend only on $\rho$ and $n$.
\end{cor}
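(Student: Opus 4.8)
The plan is to derive this directly from Lemma~\ref{coefficient}, Lemma~\ref{eval coeff}, and the triangle inequality. By Lemma~\ref{coefficient},
$$
	\ckn\rhochi
	=
	\sum_{n_1+\cdots+n_k=n}\ \prod_{i=1}^k c_{1,\rhochi,n_i},
$$
where the outer sum ranges over the $\binom{n+k-1}{k-1}$ compositions of $n$ into $k$ nonnegative integer parts; since $k$ and $n$ are fixed, this is a bounded number of summands. First I would apply the triangle inequality to obtain $|\ckn\rhochi|\leq\sum_{n_1+\cdots+n_k=n}\prod_{i=1}^k|c_{1,\rhochi,n_i}|$.

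In the case $\dc\notin\PhiQHeavy\rho$, Lemma~\ref{eval coeff} gives $|c_{1,\rhochi,n_i}|=O(q^{n_i(1+w)/2})$ for each $i$, with implied constant depending only on $\rho$ and $n_i$, hence only on $\rho$ and $n$. Multiplying these bounds over $i$ and using $\sum_i n_i=n$ yields $\prod_{i=1}^k|c_{1,\rhochi,n_i}|=O(q^{n(1+w)/2})$ for each composition, and summing over the finitely many compositions gives $|\ckn\rhochi|=O(q^{n(1+w)/2})$, i.e.\ $|\ckn\rhochi|^2=O(q^{n(1+w)})$. The heavy case is handled identically, replacing $1+w$ by $2+w$: when $\dc\in\PhiQHeavy\rho$, Lemma~\ref{eval coeff} gives $|c_{1,\rhochi,n_i}|=O(q^{n_i(2+w)/2})$, so $\prod_{i=1}^k|c_{1,\rhochi,n_i}|=O(q^{n(2+w)/2})$ for each composition and hence $|\ckn\rhochi|^2=O(q^{n(2+w)})$.

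There is essentially no obstacle here; the corollary is a purely formal consequence of the scalar estimate in Lemma~\ref{eval coeff}. The only point to track carefully is bookkeeping of constants: the number of compositions of $n$ into $k$ parts and the implied constants in Lemma~\ref{eval coeff} depend only on $\rho$, $n$, and the fixed integer $k$, so the final implied constant does as well. One could use the Cauchy--Schwarz inequality in place of the triangle inequality, but since the number of summands is a constant depending only on $n$ and $k$ this gains nothing.
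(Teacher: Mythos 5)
Your proof is correct and follows exactly the route indicated in the paper, which simply says to combine Lemma~\ref{coefficient} and Lemma~\ref{eval coeff}; the triangle inequality and the observation that the number of compositions of $n$ into $k$ parts is bounded (depending only on $n$ and $k$) is precisely the intended bookkeeping. No gaps.
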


\begin{proof}
Combine Lemma \ref{coefficient} and Lemma \ref{eval coeff}.
\end{proof}

\begin{lemma}\label{just good}
If the Mellin transform of $\rho$ has big monodromy and if $\PhiQHeavy\rho\seq\{\one\}$, then
$$
	\sum_{\substack{\dc\in\PhiQ\\[0.02in]\varphi\neq\one}}
	|\ckn\rhochi|^2
	\ \ \sim
	\sum_{\substack{\dc\in\PhiQGood\rho\\[0.02in]\varphi\neq\one}}
	|\ckn\rhochi|^2
	\ \ 
	\text{ as }
	q\to\infty
$$
\end{lemma}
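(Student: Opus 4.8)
The plan is to split the sum over $\dc\neq\one$ into its good and its non-good parts, to bound the non-good part by something of strictly smaller order, and to pin down the exact order of the good part.

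\emph{Isolating the non-good contribution.} Since $\PhiQHeavy\rho\subseteq\{\one\}$, every $\dc\in\PhiQ$ with $\dc\neq\one$ lies in $\PhiQGood\rho\cup\PhiQMixed\rho$ and in particular is not heavy, so Corollary~\ref{eval coeff k} gives $|\ckn\rhochi|^2=O(q^{n(1+w)})$ uniformly over such $\dc$, with implied constant depending only on $\rho$ and $n$. Consequently the difference of the two sums in the statement equals $\sum_{\dc\in\PhiQMixed\rho}|\ckn\rhochi|^2\le|\PhiQMixed\rho|\cdot O(q^{n(1+w)})$. Because the sets $\PhiQGood\rho,\PhiQMixed\rho,\PhiQHeavy\rho$ partition $\PhiQ$, Lemma~\ref{size} gives $|\PhiQMixed\rho|\le|\PhiQ|-|\PhiQGood\rho|=o(|\PhiQ|)$, so the difference is $o\!\big(|\PhiQ|\,q^{n(1+w)}\big)$.

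\emph{The order of the good contribution.} For $\dc\in\PhiQGood\rho$, Lemma~\ref{coefficient}, Lemma~\ref{eval coeff good} and multiplicativity of the trace over tensor products give $\ckn\rhochi=(-1)^n(\sqrt q)^{n(1+w)}F_{k,n}(\trhochi)$, where $F_{k,n}\colon U_R(\bbC)\to\bbC$ is the continuous class function $F_{k,n}(\theta)=\sum_{n_1+\cdots+n_k=n}\Tr\big((\otimes_{i=1}^k\wedge^{n_i}\std)(\theta)\big)$ whose square modulus is the integrand of $I_k(n;R)$ in~\eqref{matrix integral def}. Hence $\sum_{\dc\in\PhiQGood\rho}|\ckn\rhochi|^2=q^{n(1+w)}\sum_{\dc\in\PhiQGood\rho}|F_{k,n}(\trhochi)|^2$, and applying Theorem~\ref{equidistribution} to the continuous function $|F_{k,n}|^2$ together with Lemma~\ref{size} shows this is $\sim|\PhiQ|\,q^{n(1+w)}\,I_k(n;R)$; since $|F_{k,n}|$ is bounded on $U_R(\bbC)$ and $|\PhiQ|\to\infty$, removing the contribution of $\dc=\one$ (at most a single term of size $O(q^{n(1+w)})$) does not affect the asymptotics.

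\emph{Conclusion and the main obstacle.} When $I_k(n;R)>0$ the good contribution has exact order $|\PhiQ|\,q^{n(1+w)}$, so the $o\!\big(|\PhiQ|\,q^{n(1+w)}\big)$ discrepancy from the first step is negligible and the asserted equivalence follows. Positivity of $I_k(n;R)$ is equivalent to $1\le n\le kR$, because $F_{k,n}(\mathrm{Id})=\sum_{n_1+\cdots+n_k=n}\prod_i\binom{R}{n_i}>0$ precisely when an admissible decomposition $n=n_1+\cdots+n_k$ with $0\le n_i\le R$ exists; in the complementary range $n>kR$ one has $\deg\LC(T,\rhochi)=R$ for every $\dc$, hence $\ckn\rhochi=0$ and both sides of the claimed equivalence vanish identically. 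The step that genuinely needs care is the first one, and it is exactly here that the hypothesis $\PhiQHeavy\rho\subseteq\{\one\}$ is indispensable: a single heavy character other than $\one$ would contribute a term of size $O(q^{n(2+w)})$ by the heavy case of Corollary~\ref{eval coeff k}, which is of strictly larger order than the whole good contribution, so the estimate would collapse unless every possibly-heavy character is excluded from both sums to begin with.
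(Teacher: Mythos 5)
Your proof is correct, and its core coincides with the paper's argument: split $\PhiQ\ssm\{\one\}$ into good, mixed and heavy characters, note the heavy part is empty by the hypothesis $\PhiQHeavy\rho\seq\{\one\}$, bound each non-heavy term by $O(q^{n(1+w)})$ via Corollary~\ref{eval coeff k}, and use Lemma~\ref{size} to get $|\PhiQMixed\rho|=o(|\PhiQ|)$, so the discrepancy is $o(|\PhiQ|\,q^{n(1+w)})$. Where you go further is that you read the asserted ``$\sim$'' literally and therefore supply two steps the paper's terse proof omits: a matching lower bound for the good sum, obtained by combining Lemma~\ref{coefficient} and Lemma~\ref{eval coeff good} with the equidistribution Theorem~\ref{equidistribution} to show it has exact order $|\PhiQ|\,q^{n(1+w)}I_k(n;R)$, and a separate discussion of the degenerate range $n>kR$, where every non-heavy coefficient vanishes and both sides are identically zero. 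The paper stops at the estimate that the mixed contribution is $o(|\PhiQ|)\cdot O(q^{n(1+w)})$, which is all that is actually used downstream (in the Proposition the lemma feeds into, only negligibility relative to the normalization $q^{n(1+w)}/|\PhiQ|$ matters), so it implicitly treats ``$\sim$'' as ``equal up to $o(|\PhiQ|\,q^{n(1+w)})$''; your version buys a fully justified asymptotic equivalence, at the cost of invoking equidistribution already at this stage and of verifying $I_k(n;R)>0$ for $1\le n\le kR$, both of which you do correctly (the standing purity hypothesis on $\rho$, needed for Theorem~\ref{equidistribution} and Lemma~\ref{eval coeff good}, is in force throughout, so relying on it is fine).
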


\begin{proof}
We break the sum over all Dirichlet characters mod $c$ to sums over "good", "heavy" and "mixed" characters (see definitions \eqref{good}, \eqref{heavy}, \eqref{mixed})
$$
	\sum_{\substack{\dc\in\PhiQ\\[0.02in]\varphi\neq\one}}
		|\ckn\rhochi|^2
	\ \ =
	\sum_{\substack{\dc\in\PhiQGood\rho\\[0.02in]\varphi\neq\one}}
		|\ckn\rhochi|^2
	+
	\sum_{\substack{\dc\in\PhiQHeavy\rho\\[0.02in]\varphi\neq\one}}
		|\ckn\rhochi|^2
	+
	\sum_{\substack{\dc\in\PhiQMixed\rho\\[0.02in] \varphi\neq\one}}
		|\ckn\rhochi|^2	
$$
which give by Corollary \ref{eval coeff k}
$$
	\sum_{\substack{\dc\in\PhiQGood\rho\\[0.02in]\varphi\neq\one}}
		|\ckn\rhochi|^2
	+
	|\PhiQHeavy\rho\ssm\{1\}|\cdot O(q^{n(2+w)})
	+
	|\PhiQMixed\rho\ssm\{1\}|\cdot O(q^{n(1+w)})
$$
Lemma~\ref{size} and having $\PhiQHeavy\rho\seq\{\one\}$ conclude the proof.
\end{proof}

\begin{prop}
If the Mellin transform of $\rho$ has big monodromy and if $\PhiQHeavy\rho\seq\{\one\}$, then
$$
	\lim_{q\to\infty}
	\frac{|\PhiQ|}{q^{n(1+w)}}
	\cdot
	\Var_A[\SnAQ]
	\ =
	\int_{U_R(\bbC)}
	\left|\sum_{\substack{n_1+\cdots +n_k=n\\0\leq n_1,\ldots,n_k\leq R}}\,\,
		\Tr\Big(\big(\otimes_{i=1}^k (\wedge^{n_i}\,\std)\big)(\theta)\Big)
	\right|^2
	d\theta.
$$
\end{prop}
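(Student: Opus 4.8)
The plan is to begin from the exact identity \eqref{eqn:E-and-V-formulae}, $\Var_A[\SnAQ]=|\PhiQ|^{-2}\sum_{\dc\neq\chinot}|\ckn\rhochi|^2$, multiply through by $|\PhiQ|/q^{n(1+w)}$, and then recognize the resulting average over Dirichlet characters as a matrix integral over $U_R(\bbC)$. The first step is to apply Lemma~\ref{just good} to replace the sum over all nontrivial $\dc\in\PhiQ$ by the sum over nontrivial $\dc\in\PhiQGood\rho$: the omitted terms come from the mixed characters and from at most one heavy character, and by Corollary~\ref{eval coeff k} together with Lemma~\ref{size} their total contribution is $o(|\PhiQ|\,q^{n(1+w)})$, hence negligible after normalization.

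Next I would make the contribution of a single good character completely explicit. By Lemma~\ref{coefficient}, $\ckn\rhochi=\sum_{n_1+\cdots+n_k=n}\prod_{i=1}^k c_{1,\rhochi,n_i}$, and by Lemma~\ref{eval coeff good} each factor equals $(-1)^{n_i}(\sqrt{q})^{n_i(1+w)}\Tr((\wedge^{n_i}\std)(\trhochi))$. Since the $n_i$ sum to $n$, the scalar prefactors collapse to the common factor $(-1)^n(\sqrt{q})^{n(1+w)}$, which pulls out of the sum; and since $\Tr$ carries tensor products to products, what remains is $F(\trhochi)$, where $F(\theta)=\sum_{n_1+\cdots+n_k=n}\Tr((\otimes_{i=1}^k\wedge^{n_i}\std)(\theta))$ and the effective constraint $0\le n_i\le R$ is automatic because $c_{1,\rhochi,m}$ and $\wedge^m\std$ both vanish for $m>R$. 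Hence $|\ckn\rhochi|^2=q^{n(1+w)}|F(\trhochi)|^2$, the power of $q$ cancels the normalizing factor, and one is left with $|\PhiQ|^{-1}\sum_{\dc\in\PhiQGood\rho,\ \dc\neq\chinot}|F(\trhochi)|^2$.

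To conclude, I would use Lemma~\ref{size} to exchange $|\PhiQ|$ for $|\PhiQGood\rho|$ in the normalizing factor and observe that reinstating the single term $\dc=\chinot$ (in case it is good) changes nothing in the limit, since $|F|^2$ is bounded on the compact group $U_R(\bbC)$ and so that term is $O(1/|\PhiQ|)\to 0$. The function $F$ is continuous, being a finite linear combination of traces of continuous representations, so Theorem~\ref{equidistribution} applied to the test function $|F|^2$ gives $|\PhiQGood\rho|^{-1}\sum_{\dc\in\PhiQGood\rho}|F(\trhochi)|^2\to\int_{U_R(\bbC)}|F(\theta)|^2\,d\theta$, which is precisely the asserted value.

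Once the equidistribution theorem of \cite{HKRG} is granted, this argument is essentially bookkeeping and I do not anticipate a serious obstacle; the only point requiring genuine care is that the non-good characters, and the lone trivial character, really do contribute $o(q^{n(1+w)})$ on average, which is exactly what Corollary~\ref{eval coeff k}, Lemma~\ref{size} and Lemma~\ref{just good} are designed to ensure.
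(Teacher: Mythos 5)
Your proposal is correct and follows essentially the same route as the paper: the identity \eqref{eqn:E-and-V-formulae}, Lemma~\ref{just good} (via Corollary~\ref{eval coeff k} and Lemma~\ref{size}) to discard the mixed and heavy characters, Lemmas~\ref{coefficient} and \ref{eval coeff good} to identify the summand as $q^{n(1+w)}|F(\trhochi)|^2$, and then Lemma~\ref{size} plus Theorem~\ref{equidistribution}. If anything you are slightly more careful than the printed proof, which applies Theorem~\ref{equidistribution} (stated as an average over all good characters) directly to a sum over nontrivial good characters without remarking, as you do, that the single possible extra term $\dc=\chinot$ is $O(1/|\PhiQ|)$ and hence negligible.
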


\begin{proof}
In \eqref{eqn:E-and-V-formulae} we found the following expression for the variance 
$$
	\Var_A[\SnAQ]
	= 
	\frac{1}{|\PhiQ|^2}
	\sum_{\substack{\dc\in\PhiQ\\\dc\neq\chinot}}
		|\ckn{\rhochi}|^2.
$$	
In the limit of $q\rightarrow\infty$ we have from Lemma \ref{just good}	
$$
	\Var_A[\SnAQ]
	\sim
	\frac{1}{|\PhiQ|^2}
	\sum_{\substack{\dc\in\PhiQGood\rho\\\dc\neq\chinot}}
		|\ckn{\rhochi}|^2
$$	
which Lemma \ref{coefficient} and Lemma \ref{eval coeff good} equals to 
$$
	\frac{1}{|\PhiQ|^2}
	\sum_{\substack{\dc\in\PhiQGood\rho\\\dc\neq\chinot}}
    	\left|
    		\sum_{\substack{n_1+\cdots +n_k=n\\0\leq n_1,\ldots,n_k\leq R}}
				\Tr\Big(
					\big(\otimes_{i=1}^k (\wedge^{n_i}\,\std)\big)(\trhochi)
				\Big)
    	\right|^2
$$
Now note that $|\PhiQ|\sim|\PhiQGood\rho|$ (Lemma~\ref{size}) and apply the equidistribution result Theorem \ref{equidistribution} to conclude the proof.
\end{proof}


\section{Matrix Integral}\label{integral}

In this section we state a few results evaluating the matrix integral in certain ranges and its asymptotic behaviour.  Proofs can be found in \cite{KRRR}.

In certain ranges the matrix integral evaluates to a very simple expression. For
$n<R$ or $(k-1)R<n<kR$, we obtain the following formulae:
\begin{theorem}\label{Thm: Ik(m,N) for large m}
Let $I_k(n;R)$ be the matrix integral defined in \eqref{matrix integral def}.  Then
\begin{enum}
\item for $(k-1)R<n<kR$,
$$
I_k(n;R) = \binom{kR-n+k^2-1}{k^2-1}\;.
$$
\item for $n<R$ 
$$
I_k(n;R) = \binom{n+k^2-1}{k^2-1}\;.
$$
\end{enum}
\end{theorem}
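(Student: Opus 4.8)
The plan is to recognise the integrand as the squared modulus of a single coefficient of a power series in $T$, expand that coefficient in the orthonormal basis of irreducible characters of $U_R(\bbC)$, and integrate term by term. First, since the trace of a tensor product of operators is the product of traces, $\Tr\big((\otimes_{i=1}^k\wedge^{n_i}\std)(\theta)\big)=\prod_{i=1}^k\Tr\big((\wedge^{n_i}\std)(\theta)\big)$, and $\sum_{j\ge0}\Tr\big((\wedge^j\std)(\theta)\big)T^j=\det(1+T\theta)$. Hence the inner sum in \eqref{matrix integral def} is exactly the coefficient of $T^n$ in $\det(1+T\theta)^k$, so that $I_k(n;R)=\int_{U_R(\bbC)}\big|[T^n]\det(1+T\theta)^k\big|^2\,d\theta$.

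Next I would apply the dual Cauchy identity $\prod_{i=1}^R\prod_{a=1}^k(1+\lambda_i y_a)=\sum_{\mu}s_\mu(\lambda_1,\dots,\lambda_R)\,s_{\mu'}(y_1,\dots,y_k)$ --- the sum running over partitions $\mu$ contained in the $R\times k$ rectangle --- with $\lambda_1,\dots,\lambda_R$ the eigenvalues of $\theta$, the substitution $y_a\mapsto y_aT$, and then $y_1=\dots=y_k=1$. This gives $\det(1+T\theta)^k=\sum_{\mu}s_{\mu'}(1^k)\,s_\mu(\theta)\,T^{|\mu|}$, where $s_\mu(\theta)$ is the character of the irreducible polynomial representation of $U_R(\bbC)$ of highest weight $\mu$ (legitimate since $\ell(\mu)\le R$). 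Extracting the coefficient of $T^n$ and invoking Schur orthogonality $\int_{U_R(\bbC)}s_\mu(\theta)\overline{s_\lambda(\theta)}\,d\theta=\delta_{\mu\lambda}$, the cross terms vanish and, after the substitution $\nu=\mu'$,
\[
I_k(n;R)=\sum_{\substack{\nu\vdash n\\ \ell(\nu)\le k,\ \nu_1\le R}}s_\nu(1^k)^2 .
\]

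For the range $n<R$ the constraint $\nu_1\le R$ is automatic (as $\nu_1\le|\nu|=n<R$) and $s_\nu(1^k)=0$ whenever $\ell(\nu)>k$, so the sum runs over all $\nu\vdash n$; by the Cauchy identity $\sum_\nu s_\nu(x)s_\nu(y)t^{|\nu|}=\prod_{i,j}(1-tx_iy_j)^{-1}$ specialised at $x=y=(1,\dots,1)$ (each with $k$ ones), this equals $[t^n](1-t)^{-k^2}=\binom{n+k^2-1}{k^2-1}$. For the range $(k-1)R<n<kR$ I would first record the functional equation $c_{kR-m}(\theta)=\det(\theta)^k\,\overline{c_m(\theta)}$ for the coefficients $c_m(\theta)=[T^m]\det(1+T\theta)^k$, obtained by reversing the polynomial $\prod_j(1+e^{i\phi_j}T)^k$ (with $e^{i\phi_1},\dots,e^{i\phi_R}$ the eigenvalues of $\theta$) and using $\overline{e^{i\phi_j}}=e^{-i\phi_j}$. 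Since $|\det(\theta)^k|=1$ this gives $|c_{kR-n}(\theta)|^2=|c_n(\theta)|^2$ pointwise, hence $I_k(n;R)=I_k(kR-n;R)$; and $0<kR-n<R$, so the previous case applies and yields $\binom{kR-n+k^2-1}{k^2-1}$.

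The whole argument is short once the first two steps are in place; the one genuine idea is the dual-Cauchy expansion of $\det(1+T\theta)^k$ into $U_R(\bbC)$-characters, and the only place needing a little care is the bookkeeping of partitions inside the $R\times k$ box --- the two stated ranges being exactly those in which one of the two box-constraints becomes vacuous (for $n<R$ directly, and for $(k-1)R<n<kR$ after the reflection $n\mapsto kR-n$). An alternative to the functional-equation step is to note that complementation in the $R\times k$ box sends $\nu$ to a partition $\tilde\nu$ of $kR-|\nu|$ with $s_{\tilde\nu}(1^k)=s_\nu(1^k)$ (the two $\GL_k$-representations differing by a power of $\det$), which identifies the restricted sum for $kR-n$ with that for $n$ directly.
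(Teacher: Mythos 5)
Your argument is correct and is essentially the route taken in \cite{KRRR}, to which the paper delegates the proof: identify the integrand as $\big|[T^n]\det(1+T\theta)^k\big|^2$, expand $\det(1+T\theta)^k$ by the dual Cauchy identity, invoke Schur orthogonality to reduce $I_k(n;R)$ to $\sum_{\nu\vdash n,\ \ell(\nu)\le k,\ \nu_1\le R}s_\nu(1^k)^2$, and then observe that in each stated range one of the two box constraints on $\nu$ is vacuous (after the reflection $n\mapsto kR-n$ in the second case), so the ordinary Cauchy identity evaluates the sum to the claimed binomial coefficient. This same Schur-expansion also underlies Theorem~\ref{lattice_count}, since $s_\nu(1^k)^2$ counts pairs of Gelfand--Tsetlin patterns, so your derivation is consistent with the paper's surrounding material.
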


There is also a closed-form formula for the matrix integral for any range of the parameters, in terms of a lattice point count:

\begin{theorem}
\label{lattice_count}
$I_k(m;N)$ is equal to the count of lattice points $x= (x_i^{(j)})\in\mathbb{Z}^{k^2}$ satisfying each of the following relations:
\begin{enum}
\item $0 \leq x_i^{(j)} \leq N$ for all $1\leq i,j \leq k$;
\item $x_1^{(k)} + x_2^{(k-1)} + \cdots + x_k^{(1)} = kN - m$, and
\item $x$ is a $k \times k$ matrix whose entries satisfy the following system of inequalities,
$$
    \begin{matrix}
    	x_1^{(1)} & \leq & x_1^{(2)} & \leq & \cdots & \leq & x_1^{(k)} \\
    	  \vleq   &      &   \vleq   &      &        &      & \vleq \\
        x_2^{(1)} & \leq & x_2^{(2)} & \leq & \cdots & \leq & x_2^{(k)} \\
    	  \vleq   &      &   \vleq   &      &        &      & \vleq \\
          \vdots  &      &   \vdots  &      & \ddots &      & \vdots \\
    	  \vleq   &      &   \vleq   &      &        &      & \vleq \\
        x_k^{(1)} & \leq & x_k^{(2)} & \leq & \cdots & \leq & x_k^{(k)}
    \end{matrix}
$$
\end{enum}
\end{theorem}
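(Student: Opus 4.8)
The plan is to evaluate $I_k(m;N)$ by recognizing the integrand as a coefficient of a power of a characteristic polynomial, expanding it in the Schur basis, and interpreting the resulting sum of squares combinatorially. Write $\lambda_1,\dots,\lambda_N$ for the eigenvalues of $\theta\in U_N(\bbC)$ and $e_j$ for the $j$th elementary symmetric function in them, so that $\Tr\bigl((\wedge^j\std)(\theta)\bigr)=e_j$ and the integrand of \eqref{matrix integral def} equals $\sum_{n_1+\dots+n_k=m}e_{n_1}\cdots e_{n_k}=[z^m]\bigl(\sum_{j\ge0}e_jz^j\bigr)^k=[z^m]\det(I+z\theta)^k$. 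Using the dual Kostka expansion $e_{n_1}\cdots e_{n_k}=\sum_\lambda K_{\lambda',(n_1,\dots,n_k)}\,s_\lambda$ and summing over all compositions of $m$ into $k$ non-negative parts, the integrand becomes $\sum_\lambda f_\lambda\,s_\lambda$, where $f_\lambda$ is the number of semistandard Young tableaux of shape $\lambda'$ with entries in $\{1,\dots,k\}$, equivalently $f_\lambda=s_{\lambda'}(1,\dots,1)$ with $k$ ones, the dimension of the associated irreducible polynomial representation of $\GL_k$.

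Only exterior powers of $\std$ occur, so every partition $\lambda$ appearing has $\ell(\lambda)\le N$; hence the Schur polynomials in $N$ variables are orthonormal against the integrand for Haar measure, and Weyl's integration formula (equivalently, orthonormality of the irreducible characters) yields
$$
	I_k(m;N)=\int_{U_N(\bbC)}\Bigl|\sum_\lambda f_\lambda s_\lambda\Bigr|^2d\theta=\sum_{\substack{\lambda\vdash m\\\ell(\lambda)\le N}}f_\lambda^2=\sum_{\substack{\mu\vdash m\\\mu_1\le N}}\bigl(s_\mu(1,\dots,1)\bigr)^2
$$
(the last step reindexing $\mu=\lambda'$; terms with $\ell(\mu)>k$ vanish). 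Thus $I_k(m;N)$ equals the number of pairs $(P,Q)$ of semistandard Young tableaux of a common shape $\mu\vdash m$ with $\mu_1\le N$ and entries in $\{1,\dots,k\}$. When $m<N$ the condition $\mu_1\le N$ is vacuous and the Cauchy/RSK bijection identifies such pairs with arbitrary non-negative integer $k\times k$ matrices of entry-sum $m$, recovering $I_k(m;N)=\binom{m+k^2-1}{k^2-1}$ of Theorem~\ref{Thm: Ik(m,N) for large m}; for general $m$ the constraint $\mu_1\le N$ must be carried along.

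To turn this tableau count into the asserted lattice-point count I would record each tableau as a Gelfand--Tsetlin pattern: $P$ is equivalent to an interlacing chain $\emptyset=\mu^{(0)}\subseteq\mu^{(1)}\subseteq\dots\subseteq\mu^{(k)}=\mu$ with $\ell(\mu^{(j)})\le j$, and similarly $Q$ to a chain $\emptyset=\nu^{(0)}\subseteq\dots\subseteq\nu^{(k)}=\mu$. Glueing these two chains along the common final term gives a rhombic array whose successive rows have lengths $1,2,\dots,k-1,k,k-1,\dots,2,1$; placing its $j$th row on the $j$th antidiagonal of a $k\times k$ grid produces an array $x=(x_i^{(j)})$. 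One checks that the interlacing inequalities translate exactly into the system of monotonicity inequalities in part (iii), that the bounds $\mu^{(j)}_i\ge0$ together with $\mu_1\le N$ become $0\le x_i^{(j)}\le N$, and that the parts of $\mu$ itself occupy the central (longest) antidiagonal, so that $\sum_{i}x_i^{(k+1-i)}=|\mu|=m$. Finally, the functional equation $\det(I+z\theta)=z^N\det(\theta)\det(I+z^{-1}\theta^{-1})$ shows $[z^m]\det(I+z\theta)^k$ and $[z^{kN-m}]\det(I+z\theta)^k$ have equal $L^2$-norm, i.e.\ $I_k(m;N)=I_k(kN-m;N)$, so the central antidiagonal sum may equivalently be fixed to $kN-m$, giving condition (ii).

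The first half, through the sum-of-squares formula, is routine symmetric-function bookkeeping. The crux is the glueing/straightening bijection: one must verify that the Gelfand--Tsetlin interlacing relations for the two patterns correspond, term by term, to the row and column inequalities of the $k\times k$ array, and that the single degree parameter $m$ lands on the antidiagonal sum. The delicate point is that $\mu_1\le N$ is not symmetric in the parts of $\mu$ yet must become the uniform box constraint $x_i^{(j)}\le N$; this is what forces the particular straightening along antidiagonals rather than along rows, and is the step most in need of care.
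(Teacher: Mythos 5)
The paper itself does not prove Theorem~\ref{lattice_count}; Section~\ref{integral} states that proofs are to be found in \cite{KRRR}, so there is no internal argument to compare against. Your proposal is, however, a correct stand-alone proof and follows the same symmetric-function route one would expect. The reduction through $I_k(m;N)=\sum_{\mu\vdash m,\ \mu_1\le N,\ \ell(\mu)\le k}\bigl(s_\mu(1^k)\bigr)^2$ is right, with one small wording issue: partitions with $\ell(\lambda)>N$ do ``appear'' in the abstract expansion $e_{n_1}\cdots e_{n_k}=\sum_\lambda K_{\lambda'\mu}s_\lambda$; what one actually uses is that $s_\lambda$ vanishes identically in $N$ variables when $\ell(\lambda)>N$, so those terms drop out before invoking orthonormality --- your next clause says this, so it is only a phrasing quibble.

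On the crux you flag: the Gelfand--Tsetlin glueing does work, and the point worth making explicit is that the orientation in part~(iii) is exactly what is needed. The symbol $\vleq$ is $\leq$ rotated by $90^\circ$, and the resulting constraint is that rows increase left-to-right while columns \emph{decrease} top-to-bottom. With that orientation one checks (e.g.\ via $x_{i+1}^{(j-1)}\le x_i^{(j-1)}\le x_i^{(j)}$) that each anti-diagonal is a weakly decreasing sequence, hence a partition, and that consecutive anti-diagonals interlace; the largest entry is $x_1^{(k)}$, so the box bound $x_i^{(j)}\le N$ is equivalent to $\mu_1\le N$. Had both rows and columns been increasing, the anti-diagonal entries would not be comparable and the decomposition into two interlacing chains would fail, so the ``straightening along antidiagonals'' you worry about is indeed forced. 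With the theorem's orientation, the central anti-diagonal sum comes out as $kN-m$, so the bijection directly identifies the lattice count with $\sum_{\mu\vdash kN-m,\ \mu_1\le N}s_\mu(1^k)^2=I_k(kN-m;N)$; your appeal to the functional equation $I_k(m;N)=I_k(kN-m;N)$ then closes the loop. (One can also avoid the functional equation entirely by applying the involution $x_i^{(j)}\mapsto N-x_i^{(j)}$, which swaps the two anti-diagonal sums and reverses the orientation, giving the $\mu\vdash m$ count directly.) In short: correct, with the orientation of the inequalities being the one substantive detail that needed to be pinned down rather than waved at.
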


The asymptotic behavior of $I_k(n;R)$ for $n\approx R$ is given in the following theorem: 
\begin{theorem}\label{thm:Asymp of I}
Let $c := n/R$.  Then for $c \in [0,k]$,
\begin{equation}\label{thm:poly_approx}
	I_k(n;R)
	=
	\gamma_k(c) R^{k^2-1} + O_k(R^{k^2-2}),
\end{equation}
with
$$
	\gamma_k(c)
	=
	\frac{1}{k!\, G(1+k)^2}
	\int_{[0,1]^k}
		\delta_c(w_1 + \ldots + w_k)
		\prod_{i< j}(w_i-w_j)^2\,
		d^k w,
$$
with $\delta_c(x) =\delta(x-c)$ being the delta distribution translated by $c$, and $G$ is the Barnes $G$-function, so that for positive integers $k$, $G(1+k) = 1!\cdot 2! \cdot 3! \cdots (k-1)!$.
%
\end{theorem}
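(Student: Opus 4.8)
The plan is to evaluate the matrix integral \emph{exactly} as a finite sum of squared dimensions of $\GL_k$-representations, and then extract the $R\to\infty$ asymptotics by a lattice-point count. Write $P_n(g)=\sum_{n_1+\cdots+n_k=n}\Tr\big((\otimes_{i=1}^k\wedge^{n_i}\std)(g)\big)$ for the quantity inside $|\cdot|$ in \eqref{matrix integral def}, so $I_k(n;R)=\int_{U_R(\bbC)}|P_n|^2\,d\theta$. First I would observe that $\Tr(\wedge^j\std(g))=e_j$, the $j$th elementary symmetric function of the eigenvalues of $g$, so that $P_n(g)=[z^n]\det(1+zg)^k$. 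Applying the dual Cauchy identity $\prod_{i,a}(1+\xi_iy_a)=\sum_\lambda s_\lambda(\xi)s_{\lambda'}(y)$ with $\xi=(\text{eigenvalues of }g)$ and $y=(z,\dots,z)$ ($k$ copies) yields $\det(1+zg)^k=\sum_\lambda s_{\lambda'}(1^k)\,s_\lambda(g)\,z^{|\lambda|}$ (sum over all partitions), whence $P_n(g)=\sum_{\lambda\vdash n}s_{\lambda'}(1^k)\,s_\lambda(g)$, a finite sum because $s_{\lambda'}(1^k)=0$ unless $\lambda_1\le k$ and $s_\lambda$ vanishes on $U_R(\bbC)$ unless $\ell(\lambda)\le R$. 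By orthonormality of the Schur characters on $U_R(\bbC)$ this gives
\[
I_k(n;R)=\sum_{\substack{\lambda\vdash n\\ \ell(\lambda)\le R,\ \lambda_1\le k}}s_{\lambda'}(1^k)^2
=\sum_{\substack{\mu\vdash n\\ \mu\subseteq k\times R}}s_\mu(1^k)^2 ,
\]
the last equality by reindexing $\mu=\lambda'$, the sum being over $\mu=(\mu_1\ge\cdots\ge\mu_k\ge 0)$ with $\mu_1\le R$ and $\sum_i\mu_i=n$.

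Next I would rewrite the summand with Weyl's dimension formula: putting $\ell_i:=\mu_i+k-i$ we have $s_\mu(1^k)=\prod_{1\le i<j\le k}\tfrac{\ell_i-\ell_j}{j-i}$, and since $\prod_{1\le i<j\le k}(j-i)=\prod_{d=1}^{k-1}d^{\,k-d}=\prod_{m=1}^{k-1}m!=G(1+k)$ this becomes $I_k(n;R)=G(1+k)^{-2}\sum_\mu\prod_{i<j}(\ell_i-\ell_j)^2$ over the same range. Thus $I_k(n;R)$ is $G(1+k)^{-2}$ times the sum of the polynomial $\prod_{i<j}(\mu_i-\mu_j+j-i)^2$, of degree $d=k(k-1)$ with leading part $\prod_{i<j}(\mu_i-\mu_j)^2$, over the integer points of the $R$-dilate of the fixed $(k-1)$-dimensional polytope $\Omega_c=\{x\in[0,1]^k:x_1\ge\cdots\ge x_k,\ x_1+\cdots+x_k=c\}$, where $c=n/R$.

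The asymptotics then follow from a standard Euler--Maclaurin/Riemann-sum estimate: summing a degree-$d$ polynomial over the integer points of the $R$-dilate of a fixed rational polytope of dimension $e$ contributes a main term $R^{e+d}$ times the integral of the leading part, with error $O(R^{e+d-1})$. With $e=k-1$ and $d=k(k-1)$, so $e+d=k^2-1$, this gives
\[
I_k(n;R)=\frac{R^{k^2-1}}{G(1+k)^2}\int_{\Omega_c}\ \prod_{1\le i<j\le k}(x_i-x_j)^2\,dx+O_k(R^{k^2-2}),
\]
with $dx$ the measure obtained by eliminating $x_k=c-(x_1+\cdots+x_{k-1})$, i.e.\ $\int_{\Omega_c}F\,dx=\int_{x\in[0,1]^k,\ x_1\ge\cdots\ge x_k}\delta_c(x_1+\cdots+x_k)F\,d^kx$. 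Finally, since $\prod_{i<j}(x_i-x_j)^2$ is symmetric, removing the chamber inequalities $x_1\ge\cdots\ge x_k$ multiplies the integral by $k!$, and one recovers exactly $\gamma_k(c)R^{k^2-1}+O_k(R^{k^2-2})$.

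The hard part will be making the Riemann-sum estimate uniform and absorbing two degeneracies into the error term. For the finitely many $c\in\{0,1,\dots,k\}$ the combinatorial type of $\Omega_c$ changes; I would prove the estimate on each open subinterval of $(0,k)$ and pass to $c\in\{0,1,\dots,k\}$ using continuity of $\gamma_k$ and of the relevant integral in $c$, or simply note that lattice points within bounded distance of the facets of $R\Omega_c$ contribute only $O(R^{k^2-2})$. Also, near a wall $\mu_i=\mu_{i+1}$ the summand is not close to its leading part, but there are only $O(R^{k-2})$ such $\mu$ and each is $O(R^{k(k-1)-2})$, so their total is $O(R^{k^2-4})$ and is harmless. (Alternatively one can start from the lattice-point description of $I_k$ in Theorem~\ref{lattice_count} and run the same Ehrhart-type estimate; the two combinatorial models are matched by RSK. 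As sanity checks, $k=1$ gives $I_1(n;R)=1=\gamma_1(c)$ for $c\le 1$, while $k=2$ gives $\gamma_2(c)=c^3/6$ on $[0,1]$, matching $\binom{n+3}{3}\sim n^3/6$ from Theorem~\ref{Thm: Ik(m,N) for large m}.)
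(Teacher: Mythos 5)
Your proposal is correct, and it reconstructs the proof given in the cited reference \cite{KRRR} (the present paper states this theorem without proof and refers there). The three moves you make --- expanding $\det(1+zg)^k$ by the dual Cauchy identity to get $P_n=\sum_{\lambda\vdash n}s_{\lambda'}(1^k)s_\lambda$, invoking Schur orthonormality on $U_R(\bbC)$ to obtain $I_k(n;R)=\sum_{\mu\vdash n,\ \mu\subseteq k\times R}s_\mu(1^k)^2$, and then treating the Weyl dimension polynomial $\prod_{i<j}(\mu_i-\mu_j+j-i)^2/G(1+k)^2$ by an Ehrhart/Riemann-sum estimate over the dilated simplex slice --- are exactly the steps used in \cite{KRRR}. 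Your normalizations check out: $\prod_{i<j}(j-i)=\prod_{m=1}^{k-1}m!=G(1+k)$, the hyperplane measure matches $\int_{[0,1]^k}\delta_c(\sum w_i)\,d^kw$, and desymmetrizing supplies the $1/k!$. Your handling of the two delicate points is also appropriate: the uniformity of the Riemann-sum error as $c=n/R$ varies in $[0,k]$ (and across the finitely many $c$ where the combinatorial type of $\Omega_c$ changes) is correctly reduced to the observation that boundary lattice points contribute only $O(R^{k^2-2})$, and you rightly note that one may equivalently pass through the lattice-point model of Theorem~\ref{lattice_count} via RSK. The sanity checks ($k=1$, and $\gamma_2(c)=c^3/6$ against $\binom{n+3}{3}$) are correct.
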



\bibliographystyle{amsplain.bst}
\bibliography{main}


\end{document}